\newtheorem{thm}{Theorem}[section]
\newtheorem{lem}[thm]{Lemma}
\newtheorem{obs}[thm]{Observation}
\newtheorem*{thm*}{Theorem}
\theoremstyle{definition}
\theoremstyle{remark}
\newtheorem*{acknowledgement}{Acknowledgments}
\newcommand{\poly}{\ensuremath{\mathrm{poly}}}
\newcommand{\de}{\ensuremath{\mathrm{deg}}}
\newcommand{\Exp}{\,\mathbb{E}}
\renewcommand{\Pr}{\,\mathbb{P}}
\renewcommand{\le}{\leqslant}
\renewcommand{\leq}{\leqslant}
\renewcommand{\ge}{\geqslant}
\renewcommand{\geq}{\geqslant}
\def\longequation{$$\vcenter\bgroup\advance\hsize by -9em%
\noindent\ignorespaces\refstepcounter{equation}}%
\def\endlongequation{\egroup\eqno(\theequation)$$\global\@ignoretrue}
\begin{document}
\title[Distributed coloring of graphs with an optimal number of colors]{Distributed coloring of graphs\\ with an optimal number of colors}

\author{\'Etienne Bamas} \address{School of Computer and Communication
  Sciences, \'Ecole Polytechnique F\'ed\'erale de Lausanne, Switzerland}
\email{etienne.bamas@epfl.ch}

\author{Louis Esperet} \address{Laboratoire G-SCOP (CNRS, Univ. Grenoble Alpes), Grenoble, France}
\email{louis.esperet@grenoble-inp.fr}

\thanks{An extended abstract of this work  appeared in the proceedings
  of the 36th International Symposium on Theoretical Aspects of Computer Science (STACS), 2019.\newline Partially supported by ANR Project GATO (\textsc{anr-16-ce40-0009-01}) and LabEx PERSYVAL-Lab
  (\textsc{anr-11-labx-0025}).}

\date{}
\sloppy

\begin{abstract}
  This paper studies sufficient conditions to obtain efficient
distributed algorithms coloring graphs optimally (i.e.\ with the
minimum number of colors) in the \textsf{LOCAL} model of
computation. Most of the work on distributed vertex coloring so far has focused on
coloring graphs of maximum degree $\Delta$ with at most $\Delta+1$
colors (or $\Delta$ colors when some simple obstructions are forbidden).
When $\Delta$ is sufficiently large and $c\ge
\Delta-k_\Delta+1$, for some integer $k_\Delta\approx
\sqrt{\Delta}-2$, we give a distributed algorithm that given a
$c$-colorable graph
$G$ of maximum degree $\Delta$, finds a $c$-coloring of
$G$ in $\min\{O((\log\Delta)^{1/12}\log n), 2^{O(\log
  \Delta+\sqrt{\log \log n})}\}$ rounds, with high probability. The lower bound $\Delta-k_\Delta+1$ is best possible in the sense that
for infinitely many values of $\Delta$, we prove that when
$\chi(G)\le \Delta -k_\Delta$, finding an optimal
coloring of $G$ requires $\Omega(n)$ rounds. Our proof is a light
adaptation of a remarkable result of Molloy and Reed, who proved that
for $\Delta$ large enough, for any $c\ge \Delta-
k_\Delta$ deciding whether $\chi(G)\le c$ is in {\textsf{P}}, while
Embden-Weinert \emph{et al.}\ proved that
for $c\le \Delta-k_\Delta-1$, the same problem is
{\textsf{NP}}-complete. Note that the sequential and distributed thresholds
differ by one. 

Our first result covers the case where the chromatic number of the graph
ranges between $\Delta-\sqrt{\Delta}$ and $\Delta+1$. Our second
result covers a larger range, but gives a weaker bound on the
number of colors:  For any sufficiently large $\Delta$, and
$\Omega(\log \Delta)\le k \le \Delta/100$, we prove that every graph of maximum
degree $\Delta$ and clique number at most $\Delta-k$ can
be efficiently colored with at most $\Delta-\varepsilon k$ colors, for
some absolute constant $\varepsilon >0$, with a randomized algorithm
running in $O(\log n/\log \log n)$ rounds with high probability.
\end{abstract}
\maketitle

\section{Introduction}

The graph coloring problem plays an important role in
distributed computing, since it is
used as a subroutine in distributed algorithms for
a large variety of problems (see the recent survey book of Barenboim and
Elkin~\cite{BE13} for more details and further references). The central problem in distributed coloring is the $(\Delta+1)$-coloring problem, where a
graph of maximum degree at most $\Delta$ has to be colored with at
most $\Delta+1$ colors (see~\cite{FHK16} and~\cite{CLP18} for the fastest deterministic and randomized
algorithms to date). The bound $\Delta+1$ on the number of colors
is best possible in general, but it
follows from Brooks' Theorem that any connected graph of maximum
degree $\Delta$ which is neither an odd cycle nor a complete graph can indeed be colored with
$\Delta$ colors, instead of $\Delta+1$, and there has been some work to find fast distributed algorithms
coloring such graphs with
$\Delta$ colors. The problem was first considered by Panconesi and
Srinivasan~\cite{PS95}, and it was recently proved in~\cite{GHKM18}
that the $\Delta$-coloring problem can be solved with a randomized algorithm running in
$O(\log \Delta)+2^{O(\sqrt{\log \log n})}$ rounds when $\Delta\ge 4$,
or $O((\log \log n)^2)$ rounds when $\Delta$ is a constant. On the
other hand, it was proved in~\cite{BFH18} that a randomized algorithm
solving the $\Delta$-coloring problem needs $\Omega(\log
\log n)$ rounds. These
results, as well as all the other algorithms mentioned in this paper,
are proved in the \textsf{LOCAL} model of computation (see below for
more details). 

The main idea of $\Delta$-coloring is that by forbidding some simple
obstructions (complete graphs and odd cycles), we can save one color
(compared with the easier $(\Delta+1)$-coloring problem)
while still having a fast algorithm, whether sequential or
distributed. A natural question is: can we go further? Is there some
small set of obstructions (that can be easily recognized locally, at least
when $\Delta$ is sufficiently large), such that if we forbid these
obstructions we can find fast distributed algorithms coloring graphs
of maximum degree $\Delta$ with $\Delta-1$ colors? Or $\Delta-2$ colors?
Or $\Delta-k$ colors, for some constant $k$?

The sequential version of this question turned out to have a very
precise answer.  For any $\Delta$, let $k_\Delta$ be the maximum integer $k$ such that
$(k+1)(k+2)\le \Delta$. It can be checked that $k_\Delta=\lfloor
\sqrt{\Delta+1/4}-3/2\rfloor$ and thus 
$\sqrt{\Delta}-3<k_\Delta < \sqrt{\Delta}-1$. The following was proved
by Embden-Weinert, Hougardy and Kreuter~\cite{EHK98}.

\begin{thm}[\cite{EHK98}]\label{thm:emb}
For $3 \le  c \le \Delta- k_\Delta -1$, we cannot test for
$c$-colorability of graphs with maximum degree $\Delta$ in polynomial
time unless {\textsf{P}} $=$ {\textsf{NP}}.
\end{thm}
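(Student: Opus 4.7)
I would prove NP-hardness by a polynomial-time reduction from plain $c$-colorability (NP-complete for every fixed $c\ge 3$) to $c$-colorability of graphs of maximum degree at most $\Delta$. The central tool is a small \emph{atom gadget} that replaces a single vertex by a subgraph of maximum degree $\Delta$ whose ``port'' vertices are forced to share a common color in every proper $c$-coloring. The geometry of the atom is governed exactly by the inequality $(k+1)(k+2)\le\Delta$ defining $k_\Delta$, which is why the NP-hardness threshold sits precisely at $c=\Delta-k_\Delta-1$.

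\textbf{The atom.} Take a clique $K_{c-1}$ on vertices $w_1,\dots,w_{c-1}$, together with $k_\Delta+1$ new \emph{ports} $u_1,\dots,u_{k_\Delta+1}$, each joined to every $w_j$. In any proper $c$-coloring the clique exhausts $c-1$ colors, forcing every port to take the one remaining color; in particular all ports share a common color. Inside the atom, a forbidder $w_j$ has degree $c-2+(k_\Delta+1)=c+k_\Delta-1$ and a port has degree $c-1$; the hypothesis $c\le\Delta-k_\Delta-1$ ensures both are at most $\Delta-2$. Each port therefore has at least $\Delta-(c-1)\ge k_\Delta+2$ free half-edges, so a single atom admits up to $(k_\Delta+1)(k_\Delta+2)\le\Delta$ external color-locked edges---exactly the port budget that the statement allows.

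\textbf{Reduction.} I would convert a source instance $G$ into $G'$ with $\Delta(G')\le\Delta$ by replacing each vertex $v\in V(G)$ with a widget $W_v$ made of one or several atoms chained together through an \emph{equality bridge} that forces the chosen colors of two neighbouring atoms to agree. Each edge $uv\in E(G)$ then becomes one edge between a free port of $W_u$ and a free port of $W_v$, with no port carrying more than its $k_\Delta+2$ external edges. A natural bridge attaches a spare port of each of two atoms $H_1,H_2$ to every forbidder of a helper atom $H^\star$: both spare ports are pinned to the chosen color of $H^\star$, and hence to each other. The hypothesis $c+k_\Delta+1\le\Delta$ leaves exactly the $2$ units of slack on each forbidder of $H^\star$ needed to absorb the two new edges, and iterating the bridge in a tree yields widgets of arbitrary size.

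\textbf{Correctness and main obstacle.} Any proper $c$-coloring of $G$ lifts to one of $G'$ by painting every port of $W_v$ with $v$'s color and greedily completing each internal forbidder clique with the remaining $c-1$ colors, and any proper $c$-coloring of $G'$ descends by reading off the common port color of each widget; adjacency is preserved because each edge of $G$ appears as a port-to-port edge across distinct widgets. The principal technical obstacle is the bridging step: one has to check that the link really forces two chosen colors to coincide and that a tree of such links inside a single widget never pushes any vertex past degree $\Delta$. This is precisely where the inequality $(k+1)(k+2)\le\Delta$ is consumed tightly---one extra unit of slack would blow either a forbidder or a spare port past $\Delta$ in at least one pattern---which is consistent with the matching Molloy--Reed upper bound on the polynomial-time side of the threshold.
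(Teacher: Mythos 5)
A preliminary remark: the paper does not prove this statement itself; it is Theorem~1, imported verbatim from Embden-Weinert, Hougardy and Kreuter \cite{EHK98}. So your reduction has to be judged on its own terms rather than against an in-paper argument.

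Your overall strategy (reduce from unrestricted $c$-colorability, $c\ge 3$, by replacing each vertex with a widget of ports all forced to a common color, one free port slot per incident edge of the source graph) is the right one and matches the spirit of \cite{EHK98} and of the reducer machinery in Section~4.1 of this paper. The atom itself is sound: a $(c-1)$-clique does force all of its common neighbors onto the single remaining color, and your degree counts for a lone atom are correct. The genuine gap is in the equality bridge. When you join a spare port $p_1$ of $H_1$ to all $c-1$ forbidders of the helper atom $H^\star$, the vertex $p_1$ --- which already has degree $c-1$ inside $H_1$ --- acquires $c-1$ new edges and ends with degree $2(c-1)$. Your degree audit checks only the forbidders of $H^\star$ (the ``$+2$ units of slack''), never the two spare ports. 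The theorem must cover every $c$ up to $\Delta-k_\Delta-1$, and since $k_\Delta<\sqrt{\Delta}$ we have $\Delta-k_\Delta-1>\tfrac{\Delta}{2}+1$ for all $\Delta$ beyond a small constant; throughout that upper (and hardest) part of the range the bridge forces $2(c-1)>\Delta$. Concretely, for $\Delta=100$ one has $k_\Delta=8$ and $c$ may be $91$, so each spare port would have degree $180$. Every widget containing more than one atom --- i.e.\ every vertex of the source graph of degree exceeding $(k_\Delta+1)(k_\Delta+2)$ --- therefore breaks the maximum-degree constraint, and since the source instance has unbounded degree this cannot be avoided by preprocessing.

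The obstruction is structural: a vertex can lie in the common neighborhood of two vertex-disjoint $(c-1)$-cliques only if $2(c-1)\le\Delta$, so any repair must chain atoms without ever attaching an already-used port to a second full clique. This is exactly how the reducer expansion in the proof of Theorem~\ref{thm:sharp} stays within degree $\Delta$ at the threshold: the $(c-1)$-clique $C_i$ is shared by the entire stable set $S_i$ (which it forces to be monochromatic), the clique vertices are saturated at degree exactly $\Delta$ and receive no external edges at all, and the chaining is done by deleting a clique vertex of the previous layer and splitting its at most $\Delta$ incident edges among the $\Delta-c+2\ge k_\Delta+3$ new stable-set vertices, each of which can absorb $\Delta-c+1\ge k_\Delta+2$ of them; the inequality $(k_\Delta+2)(k_\Delta+3)>\Delta$ (maximality of $k_\Delta$) is what makes this split feasible. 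The classical route to the hardness result then builds the equality gadget not from a helper clique but from a non-$c$-colorable critical graph of maximum degree $\Delta$ (such as the $G_i$ of Section~4.1) with one edge $xy$ deleted: every $c$-coloring of $G_i-xy$ must give $x$ and $y$ the same color, and both terminals now have a free slot. I would encourage you to rebuild your widgets around that mechanism; as written, the proposed bridge does not establish the statement for most of the claimed range of $c$.
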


The following strong converse was then proved by Molloy and Reed~\cite{MR14}.

\begin{thm}[\cite{MR14}]\label{thm:mr1}
For sufficiently large (but constant) $\Delta$, 
and every $c \ge \Delta -k_\Delta$, there is a linear time
deterministic algorithm to test whether graphs of maximum degree
$\Delta$ are $c$-colorable. Furthermore, there is a polynomial time
deterministic algorithm that will produce a $c$-coloring whenever one exists.
\end{thm}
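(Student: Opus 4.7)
My plan is to reduce Theorem~\ref{thm:mr1} to a structural dichotomy: I would aim to prove that, for $\Delta$ sufficiently large and $c \geq \Delta - k_\Delta$, a graph $G$ of maximum degree at most $\Delta$ satisfies $\chi(G) \leq c$ if and only if $\omega(G) \leq c$. The ``only if'' direction is immediate, so the entire effort goes into the converse. Once this biconditional is in hand, the algorithm is transparent: for testing, check whether $G$ contains a $(c+1)$-clique; for construction, follow a constructive version of the converse.

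To show that $\omega(G) \leq c$ forces $\chi(G) \leq c$, I would take a $c$-critical counterexample $G$ of minimum order and seek a contradiction. Criticality gives $\delta(G) \geq c$, so every vertex has at most $k_\Delta$ non-neighbours among its potential $\Delta$ neighbours. Pick a vertex $v$ of maximum degree and examine $N(v)$. The plan is to exhibit a reducible configuration: either a small set $S \ni v$ such that any proper $c$-coloring of $G - S$ extends to $G$, or the conclusion that $N[v]$ already contains a copy of $K_{c+1}$, contradicting $\omega(G) \leq c$. The extension argument itself is built from Kempe swaps on two-colour components inside $N(v)$: if $N(v)$ contains enough pairwise non-adjacent vertices carrying colours in a good configuration, then collapsing their colour classes frees a colour for $v$.

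The combinatorial heart is a pigeonhole inequality that balances the size of an obstructing near-clique in $N(v)$ against the number of pairs of non-adjacent neighbours: forcing $(k_\Delta+1)(k_\Delta+2) \leq \Delta$ is exactly the threshold at which the Kempe-based recolouring must succeed whenever the neighbourhood falls short of containing a $(c+1)$-clique. For the algorithmic side, $\Delta$ is treated as a constant, so reducible configurations can be located in $O(1)$ time per vertex and each reduction decreases $|V(G)|$; iterating through the graph and lifting colours back through the chain of reductions yields the claimed polynomial-time coloring procedure. Detection of $K_{c+1}$ is linear because any such clique is contained in $N[v]$ for each of its vertices, and $|N[v]| = O(1)$.

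The main obstacle I anticipate is the dichotomy itself: turning the failure of a Kempe recolouring into structural information precise enough to pin down a $(c+1)$-clique in the neighbourhood. Several Kempe chains may interact, and swapping along one can disturb the colour classes used by others, so a careful ordering---or a simultaneous argument that handles all swaps at once---is needed. Calibrating the analysis so that the bound $(k_\Delta+1)(k_\Delta+2) \leq \Delta$ becomes just barely sufficient, rather than a factor or logarithm off, is where I expect the proof to be most delicate.
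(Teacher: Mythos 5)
The central claim of your proposal --- that for $c \ge \Delta - k_\Delta$ a graph of maximum degree $\Delta$ satisfies $\chi(G)\le c$ if and only if $\omega(G)\le c$ --- is false, and the paper itself contains the counterexample. In the proof of Theorem~\ref{thm:sharp}, take $c=\Delta-k_\Delta$ with $(k_\Delta+1)(k_\Delta+2)=\Delta$ (which happens for infinitely many $\Delta$): the graphs $G_i$ built there are chains of $c$-reducers, each layer being a $(c-1)$-clique $C_i$ completely joined to a stable set $S_i$ of size $\Delta-c+2$. For $i\ge 2$ every vertex of $C_i$ already has degree exactly $\Delta$ inside $S_i\cup C_i$, so the largest cliques are of the form $C_i\cup\{s\}$ with $s\in S_i$ and have size $c$; nevertheless $G_i$ is not $c$-colorable, because non-$c$-colorability is preserved under reduction and $G_1=K_{c+1}$. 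So ``check for a $K_{c+1}$'' is not a valid test at the boundary value $c=\Delta-k_\Delta$, which is included in the range of Theorem~\ref{thm:mr1}. These examples also show that the obstruction at $c=\Delta-k_\Delta$ is not even \emph{local}: it is a chain of reducers spanning the whole graph, which is exactly why the sequential threshold ($\Delta-k_\Delta$) and the distributed threshold ($\Delta-k_\Delta+1$) differ by one in this paper. Even in the easier range $c\ge\Delta-k_\Delta+1$, the correct local obstruction (Theorem~\ref{thm:mrlocal}) is a non-$c$-colorable closed neighborhood $\{v\}\cup N(v)$, which need not contain a $(c+1)$-clique; a constant-size colorability check replaces clique detection, but it is not the same predicate.

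Beyond the false dichotomy, the proof strategy does not match what is actually needed. Molloy and Reed's argument is not a minimal-counterexample-plus-Kempe-chains proof: it is a probabilistic argument built on the $d$-dense decomposition, a wasteful random coloring of the sparse part analyzed via Talagrand's inequality, the Lov\'asz Local Lemma, and a separate combinatorial treatment of reducers and near-reducers (precisely the machinery this paper imports in Sections~\ref{sec:thm1} and~\ref{sec:main}). Elementary Kempe-chain arguments in this vein (Brooks, Borodin--Kostochka-type results) save $O(1)$ colors below $\Delta$, nowhere near the $\Theta(\sqrt{\Delta})$ savings required here. Finally, the inequality $(k_\Delta+1)(k_\Delta+2)\le\Delta$ enters not as a threshold for Kempe recolorings to succeed but as the degree budget for gluing consecutive reducers: a stable set of size $\Delta-c+2$ whose members may each carry $\Delta-c+1$ external edges can absorb the $\Delta$ edges of the deleted clique vertex exactly when $(\Delta-c+2)(\Delta-c+1)\ge\Delta$. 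Your proposal would need to be rebuilt around the reducer/reduction framework and the probabilistic coloring machinery rather than around clique detection.
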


Our main result will be to prove that a similar dichotomy occurs in
the \textsf{LOCAL} model, with a slightly larger tractability threshold
($\Delta -k_\Delta+1$ instead of $\Delta -k_\Delta$).

\begin{thm}\label{thm:col}
For sufficiently large $\Delta$, and any $c \ge \Delta-k_{\Delta}+1$, there is a distributed randomized
algorithm that
takes a graph $G$ with maximum degree $\Delta$ as input, and does the
following: either some vertex outputs a certificate
that $G$ is not $c$-colorable, or the algorithm finds a $c$-coloring
of $G$. The algorithm runs in $O(\textsf{T}_{\de+1}(n,\Delta)) +O((\log\Delta)^{13/12})\cdot \textsf{T}_{LLL}(n,\poly \,
\Delta)$ rounds w.h.p., which is
$\min\{O((\log\Delta)^{1/12}\log n), 2^{O(\log \Delta+\sqrt{\log \log
    n})}\}$ rounds w.h.p.
\end{thm}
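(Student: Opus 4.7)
The plan is to port the sequential proof of Molloy and Reed (Theorem~\ref{thm:mr1}) to the \Lo{} setting, with one extra color (going from $c \ge \Delta - k_\Delta$ to $c \ge \Delta - k_\Delta + 1$) providing the probabilistic slack needed to run the key steps in parallel via a distributed Lovász Local Lemma (LLL). The ingredients I would rely on are: (i) Molloy--Reed's structural dichotomy, which identifies a finite list of local obstructions to $c$-colorability together with a dense/sparse decomposition when none is present; (ii) a state-of-the-art distributed $(\de+1)$-list-coloring algorithm running in $\textsf{T}_{\de+1}(n,\Delta)$ rounds; and (iii) a distributed LLL solver running in $\textsf{T}_{LLL}(n,\poly\,\Delta)$ rounds per call.

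First, I would have every vertex inspect a constant-radius ball to test for the Molloy--Reed obstructions; any vertex seeing one outputs a non-colorability certificate, so from then on we may assume $G$ is $c$-colorable. Next, in $O(1)$ rounds every vertex is locally classified as sparse (having a constant number of non-edges in its neighborhood) or dense, and dense vertices are grouped into almost-cliques $K_1,\ldots,K_t$ of nearly maximum size and constant diameter. I would then compute a $(\de+1)$-list-coloring in $\textsf{T}_{\de+1}(n,\Delta)$ rounds to serve as a scaffold on the almost-clique boundaries and on sparse vertices adjacent to the $K_i$. Each almost-clique is completed locally by exhaustive enumeration in $O(1)$ rounds, using its constant diameter to aggregate information. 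The sparse part is colored in $O((\log\Delta)^{13/12})$ stages of random palette sparsification, each implemented by a call to the distributed LLL solver: at each stage, every uncolored vertex tries a random color from its current list, and the bad events are local events of small probability, controlled by the sparsity surplus, so the LLL criterion holds uniformly. After these iterations the residual uncolored subgraph is shattered into components of size $\poly\log n$, which can be finished deterministically within the same time budget.

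The main obstacle is Step~4: I must show that the extra color $+1$ gives exactly enough probabilistic room for the LLL criterion to survive across all $O((\log\Delta)^{13/12})$ stages. In the sequential Molloy--Reed argument one can greedily absorb the sparsity gain vertex by vertex, but the distributed version forces many vertices to be colored simultaneously, so I would need a careful accounting of how palette slack shrinks under conditioning, together with a Talagrand-type concentration argument showing that local sparsity is preserved from one stage to the next. Finally, plugging in the best current bounds for $\textsf{T}_{\de+1}$ and $\textsf{T}_{LLL}$ (Chang--Li--Pettie and Fischer--Ghaffari) into the composite running time $O(\textsf{T}_{\de+1}(n,\Delta))+O((\log\Delta)^{13/12})\cdot \textsf{T}_{LLL}(n,\poly\,\Delta)$ yields the two alternatives $O((\log\Delta)^{1/12}\log n)$ and $2^{O(\log\Delta+\sqrt{\log\log n})}$, completing the proof.
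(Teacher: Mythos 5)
There are two genuine gaps. First, your Step on the dense components --- ``each almost-clique is completed locally by exhaustive enumeration in $O(1)$ rounds'' --- does not work. An almost-clique here has size close to $c$ and must be colored with only $c$ colors, so it has essentially no spare color: which colors it uses, and which color class absorbs which external neighbor, must be coordinated with the vertices outside it. The union of locally valid $c$-colorings of the dense components need not be a proper coloring of their union, and one cannot in general extend an arbitrary coloring of the rest of the graph into a near-clique of size $c-O(\sqrt{\Delta})$. This is precisely why Molloy and Reed (and the paper) first compute a \emph{nice} coloring of each dense set only to define a contraction, build an auxiliary graph $F$ in which each dense set becomes a clique $A_i\cup\mathrm{All}_i$ of size exactly $c$ with extra edges between heavy external neighbors, maintain throughout the process the invariant that each color is missing from some vertex of $A_i$ (the events $E(i,x)$), and color the $A_i$'s \emph{last}, by random permutations inside the LLL iterations (with McDiarmid-type concentration), in a specific interleaved order with the boundary vertices $B_H$ and $B_L$. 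Your plan colors the cliques early and independently, which is exactly the step that fails.

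Second, you misattribute the role of the hypothesis $c\ge\Delta-k_\Delta+1$. It is not needed as ``probabilistic slack'' for the LLL stages --- the sequential argument already runs the same semi-random process at $c=\Delta-k_\Delta$. Its role is structural: it guarantees that every $c$-reducer is deletable and that $c$-near-reducers automatically satisfy the conclusions of Lemma~\ref{lem:25}, so these components can be set aside and colored at the very end by a single parallel $(\de+1)$-list-coloring call. At $c=\Delta-k_\Delta$ reducers must be reduced one after another, creating new reducers each time; this chain is inherently sequential and is exactly the $\Omega(n/\Delta)$ lower-bound construction of Theorem~\ref{thm:sharp}. Your outline never mentions reducers or near-reducers, yet the dense decomposition may contain them, and without handling them (or explaining why the $+1$ lets you defer them) the argument cannot be completed. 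The remaining ingredients of your plan (local certificate via non-$c$-colorable closed neighborhoods, dense/sparse decomposition, $O((\log\Delta)^{13/12})$ LLL-driven stages for the sparse part, and the final assembly of the running time) do match the paper.
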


Here, w.h.p.\ (with high probability) means with probability at least
$1-O(n^{-\alpha})$, for any fixed $\alpha>0$. The values of
$\textsf{T}_{LLL}$ and $\textsf{T}_{\de+1}$ correspond to the round
complexities of the distributed Lov\'asz Local Lemma and the
$(\deg+1)$-list coloring problem. The precise definitions  will be given in  Section~\ref{sec:prel}.

Note that the
chromatic number of $G$ can be smaller than the threshold
$\Delta-k_\Delta+1$, what matters is that the number $c$ of available
colors is at least this threshold. 
We will prove that the value of $\Delta-k_\Delta+1$ is sharp, in the
following sense.

\begin{thm}\label{thm:sharp}
When $c \le \Delta - k_\Delta - 1$ (for any value of $\Delta$), and
when $c= \Delta-k_\Delta$ (for infinitely many values of $\Delta$), there exist arbitrarily
large graphs $G$ of maximum degree $\Delta$ for which
$\chi(G)=c$, and such that any
distributed algorithm coloring $G$ with $c$ colors takes
$\Omega(n/\Delta)$ rounds.
\end{thm}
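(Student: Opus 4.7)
I would use a standard indistinguishability argument in the LOCAL model. The key combinatorial input is a \emph{propagation gadget} $H_c$: a graph of maximum degree at most $\Delta$ with $O(\Delta)$ vertices and two distinguished port vertices $u$ and $v$, such that in every proper $c$-coloring of $H_c$ the colors of $u$ and $v$ are related by a fixed non-identity permutation $\sigma$ of the color set $[c]$. Assuming such a gadget exists, for each integer $m$ let $C_m$ be the ``necklace'' obtained by arranging $m$ copies of $H_c$ in a cycle, identifying consecutive ports. Then $C_m$ is $c$-colorable if and only if $\sigma^m = \mathrm{id}$, since propagating a color around the cycle yields exactly this constraint. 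Since $\sigma$ has finite order $d \ge 2$, we may pick an integer $m$ of order $\Theta(n/\Delta)$ with $d \mid m$ and set $G := C_m$ (with maximum degree $\Delta$, $\Theta(n)$ vertices, and $\chi(G)=c$) and $G' := C_{m+1}$ (for which $\sigma^{m+1} = \sigma \neq \mathrm{id}$, so $\chi(G')>c$).

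Now consider any randomized LOCAL algorithm $\mathcal{A}$ of round complexity $r \le m/3$ that $c$-colors $G$ with high probability. Its output at each vertex $w$ depends only on the $r$-ball around $w$ (including vertex identifiers) and local random bits. Because each copy of $H_c$ has constant diameter, every $r$-ball inside $C_m$ or $C_{m+1}$ is a rooted path of $\Theta(r)$ gadgets glued end-to-end, and the joint $r$-neighborhood of any pair of adjacent vertices also fits inside such an arc; thus there is a bijection $\varphi : V(G) \to V(G')$ preserving adjacency such that the joint rooted neighborhood of each edge of $G$ is isomorphic to the joint neighborhood of the corresponding edge of $G'$ (after relabelling identifiers). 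Consequently the joint output distribution of $\mathcal{A}$ on every edge of $G$ equals the joint distribution on the corresponding edge of $G'$. If $\mathcal{A}$ outputs a proper $c$-coloring of $G$ with high probability, then by a union bound over edges the same distribution gives a proper $c$-coloring of $G'$ with high probability, contradicting $\chi(G') > c$. Hence $r = \Omega(m) = \Omega(n/\Delta)$, as required.

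The substantial work is the construction of the gadget $H_c$ with the prescribed color-rigidity inside the degree budget $\Delta$. For $c \le \Delta - k_\Delta - 1$, the NP-hardness reduction of Embden-Weinert, Hougardy, and Kreuter (Theorem~\ref{thm:emb}) is itself assembled out of small color-rigid modules that enforce equality or forbidden-pair constraints between designated vertices; extracting and re-rooting such a module yields $H_c$ directly, with degree padded up to $\Delta$ using auxiliary cliques. For the sharp case $c = \Delta - k_\Delta$, an NP-hardness argument is unavailable (the problem is polynomial by Theorem~\ref{thm:mr1}), and one must instead use the critical configurations identified in the Molloy--Reed analysis at the tractability threshold, which are essentially glueings of copies of $K_{k_\Delta + 2}$. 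Combining two such critical subgraphs produces a $c$-colorable gadget $H_c$ that transmits a color permutation rigidly from port to port; the requirement that this permutation be nontrivial and that $H_c$ achieve maximum degree exactly $\Delta$ imposes mild arithmetic conditions on $k_\Delta$, which are satisfied for infinitely many $\Delta$. The main obstacle of the whole argument is the combinatorial design of $H_c$ in each regime; the coupling argument above is then routine.
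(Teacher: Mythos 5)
Your high-level plan --- two locally indistinguishable graphs of linear diameter, one $c$-colorable and one not, followed by a Linial-style indistinguishability argument --- is indeed the skeleton of the paper's proof. But the combinatorial engine you propose cannot work, and the part you defer as ``the substantial work'' is exactly the content of the theorem. The specific problem: a gadget $H_c$ with two single-vertex ports $u,v$ such that \emph{every} proper $c$-coloring $\phi$ satisfies $\phi(v)=\sigma(\phi(u))$ for a \emph{fixed non-identity} $\sigma$ does not exist when $c\ge 3$. If $\phi$ is any proper $c$-coloring and $\pi$ is an arbitrary permutation of the colors, then $\pi\circ\phi$ is also proper, so $\pi(\sigma(\phi(u)))=\sigma(\pi(\phi(u)))$; choosing $\pi$ to fix $a:=\phi(u)$ but move $\sigma(a)$ (possible since $c\ge3$) forces $\sigma(a)=a$, and since $\phi(u)$ ranges over all colors as $\phi$ varies, $\sigma=\mathrm{id}$. (This is why the necklace argument works for $2$-coloring cycles --- the symmetric group on two colors is abelian --- but does not generalize.) Consequently the whole ``$C_m$ is colorable iff $\sigma^m=\mathrm{id}$'' mechanism collapses, and neither the Embden-Weinert--Hougardy--Kreuter gadgets nor unspecified ``critical configurations'' of Molloy--Reed can supply what you ask for.

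The paper's construction avoids this by anchoring non-colorability at one end of a \emph{chain} rather than in the holonomy of a cycle. Starting from $G_1=K_{c+1}$, each step replaces a vertex $v_{i-1}$ of the previous clique by a stable set $S_i$ of size $\Delta-c+2$ (whose members inherit $v_{i-1}$'s old neighbors) fully joined to a new $(c-1)$-clique $C_i$; in any $c$-coloring the clique $C_i$ uses $c-1$ colors, so $S_i$ is forced to be monochromatic in the remaining color and behaves exactly like the deleted vertex, whence $G_i$ inherits non-$c$-colorability from $K_{c+1}$ while every proper subgraph is $c$-colorable. The degree budget for splitting $v_{i-1}$'s edges among $S_i$ is precisely the inequality $(\Delta-c+2)(\Delta-c+1)\ge\Delta$, which is where the thresholds $c\le\Delta-k_\Delta-1$, and $c=\Delta-k_\Delta$ with $(k_\Delta+1)(k_\Delta+2)=\Delta$, enter; your proposal never produces this arithmetic, only a promise of ``mild conditions.'' Finally, the paper compares $G_i$ with $G_i$ minus a single middle edge (same vertex set), which makes the neighborhood-graph containment immediate. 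You would need to replace your gadget by some such layered critical construction for the argument to go through.
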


In the \textsf{LOCAL} model of computation, if
the algorithm runs in $r$ rounds, the color assigned to a vertex $v$ is based
only on the (subgraph induced by the) vertices at distance at most $r$
from $v$. The fact that when $c \ge \Delta-k_{\Delta}+1$, it can be
decided whether $G$ is $c$-colorable by only looking at each
neighborhood was already proved by Molloy and Reed~\cite{MR14} (see
Theorem~\ref{thm:mrlocal}). In this paper, we are mostly interested in
\emph{producing} such a coloring in a distributed way, and it is a priori unclear that it
can be done in a small number rounds. For instance, in the \textsf{LOCAL} model it can be decided
in a single round whether a graph has maximum
degree at most two (and is therefore 3-colorable), but finding a
3-coloring of a path takes an unbounded number of rounds~\cite{Lin92}.

An interesting difference between Theorems~\ref{thm:col} and~\ref{thm:mr1} (besides the fact
that the sequential and distributed thresholds are not the same), is
that in the sequential result it is crucial that $\Delta$ is a
constant. If $\Delta$ depends on $n$, then Molloy and
Reed~\cite{MR14} proved that the tractability threshold is around
$\Delta-\Theta(\log \Delta)$ colors. On the other hand, in the distributed setting there is no
requirement on $\Delta$.

\medskip

It should be mentioned that efficient distributed coloring algorithms involving the
chromatic number are not frequent. A rare example of such an algorithm involving a general class of graphs (not just paths or
cycles, or line-graphs for instance) is
the following result of Schneider and Wattenhofer~\cite{SW11}: when
$\Delta=\Omega(\log^{1+1/\log^*n}n)$ and $\chi=O(\Delta/\log^{1+1/\log^*n}n)$, they find a randomized distributed
algorithm coloring graphs of maximum degree $\Delta$ and chromatic
number $\chi$ with at most $(1-1/O(\chi))\Delta$ colors w.h.p., and
running w.h.p.\ in $O(\log \chi+\log^*n)$ rounds. Two significant
differences with our result are the requirement on $\Delta$ and
the fact that the number of colors in the resulting coloring is not
best possible. We also note that in the setting of Theorem~\ref{thm:mr1}
and Theorem~\ref{thm:1} below, the chromatic number is an additive factor away
from the maximum degree, while the result of Schneider and
Wattenhofer~\cite{SW11} mentioned above asks for a much larger (multiplicative) gap
between $\chi$ and $\Delta$.

\medskip

Theorem~\ref{thm:col} covers in particular the situation
where $\chi(G)\ge \Delta-\sqrt{\Delta}+1$ (and in this case, gives an
efficient algorithm to obtain an optimal coloring of the
graph). Recall that Brooks' theorem (and its algorithmic variants)
colors graphs of maximum degree $\Delta\ge 3$ distinct from
$K_{\Delta+1}$ (or equivalently, with clique number at most $\Delta$) with at most $\Delta$ colors. Our
next result generalizes the algorithmic versions of Brooks' theorem in the following direction.

\begin{thm}\label{thm:1}
  There exists $\Delta_0>0$ such that for every $\Delta\ge \Delta_0$ and
$  2^{59}\log \Delta \le d \le \tfrac{\Delta}{100}$,  there exists a
randomized distributed algorithm that given an $n$-vertex graph of maximum
degree $\Delta$, does the following: either some vertex outputs a clique of size more than
$\Delta-k$ if such a clique exists, or the algorithm finds a coloring with at most
$\Delta-2^{-23}k$ colors. The round complexity is $O(\textsf{T}_{LLL}(n,\poly \,
\Delta)+\textsf{T}_{\de+\Omega(k)}(n,\Delta))$ rounds w.h.p., which is
the minimum of $O(\log_\Delta n+\log_k \Delta)+2^{O(\sqrt{\log \log n})}$ and $2^{O(\log \Delta+\sqrt{\log \log n})}$
  w.h.p., and in particular it is  $O(\log n/\log \log n)$  w.h.p. 
\end{thm}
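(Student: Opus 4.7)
The plan is to follow the two-phase structure suggested by the round complexity $\textsf{T}_{\mathrm{LLL}}+\textsf{T}_{\deg+\Omega(k)}$: first a random partial coloring whose correctness is established via the distributed Lov\'asz Local Lemma, then a single call to a $(\deg+\Omega(k))$-list coloring subroutine.

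\textbf{Local detection.} Every clique of size more than $\Delta-k$ is contained in the closed neighborhood $N[u]$ of each of its vertices, so in a single round each vertex can test for such a clique and output it. If no vertex finds one, we proceed under the hypothesis $\omega(G)\le\Delta-k$.

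\textbf{Random trial and LLL.} Fix a palette of $c=\Delta-2^{-23}k$ colors. Each vertex $v$ proposes a uniformly random $\phi(v)\in[c]$ and keeps it unless some neighbor proposed the same color. Write $A_v$ for the set of colors still available to $v$ after the trial, and $U_v$ for its uncolored neighbors; let $B_v$ be the event $|A_v|-|U_v|<\varepsilon k$ for a suitable absolute constant $\varepsilon>0$. The main analytic step is to prove $\Exp[|A_v|-|U_v|]\ge 2\varepsilon k$ for every $v$, using $\omega(G)\le\Delta-k$. Talagrand-type concentration applied to the attempts in the ball of radius $2$ around $v$ (each attempt shifting the gap by $O(1)$) then gives $\Pr[B_v]\le e^{-\Omega(k)}\le \Delta^{-\Omega(1)}$, thanks to $k\ge 2^{59}\log\Delta$. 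Since $B_v$ depends only on randomness within distance $2$ from $v$, the events have dependency degree $\poly\,\Delta$, so the distributed LLL produces a trial avoiding all $B_v$ in $\textsf{T}_{\mathrm{LLL}}(n,\poly\,\Delta)$ rounds.

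\textbf{Finishing and accounting.} Conditioned on $\neg B_v$ everywhere, each remaining vertex has list size at least $\deg_{\mathrm{unc}}(v)+\varepsilon k$, so a single call to the $(\deg+\Omega(k))$-list coloring routine finishes the coloring in $\textsf{T}_{\deg+\Omega(k)}(n,\Delta)$ rounds; the sum matches the claimed bound and collapses to $O(\log n/\log\log n)$ using known upper bounds on the two subroutines.

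\textbf{Main obstacle.} The hardest step is the expectation lower bound $\Exp[|A_v|-|U_v|]\ge 2\varepsilon k$. The hypothesis $\omega(G)\le\Delta-k$ is compatible with $N(v)$ having very few non-edges (for instance $K_\Delta$ minus a $k$-matching), and a single non-edge contributes only $\Theta(1/c)$ to the expected saving, so a direct non-edge count is hopeless. The gain must instead be extracted from a combination of (i) a degree deficit at $v$, (ii) a sparse induced subset of $N(v)$ of size $\Omega(k)$ when $v$ has near-maximum degree, and (iii) a careful accounting of repeated colors among such a subset. Tuning the parameters so that the palette saving $2^{-23}k$, the residual slack $\Omega(k)$, and the failure probability $\Delta^{-\Omega(1)}$ hold simultaneously is precisely what pins down the numeric constants $2^{-23}$ and $2^{59}$ in the statement.
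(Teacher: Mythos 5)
Your local clique detection and your treatment of the final list-coloring phase are fine, and your random-trial-plus-LLL step is essentially the paper's Lemma~\ref{lem:sparse} (the wasteful coloring procedure, analyzed with Talagrand and the distributed LLL). But the proposal has a genuine gap at exactly the point you flag as the ``main obstacle'': the expectation bound $\Exp[|A_v|-|U_v|]\ge 2\varepsilon k$ is simply \emph{false} for a single uniform random trial when $v$ is dense. Take $v$ of degree $\Delta$ whose neighborhood induces $K_\Delta$ minus a matching of $k+1$ edges; then $\omega(G)\le\Delta-k$ is satisfied, yet $N(v)$ contains only $k+1$ non-adjacent pairs, each of which becomes a repeated color with probability $O(1/\Delta)$, so the expected number of repeated colors in $N(v)$ is $O(k/\Delta)$, not $\Omega(k)$. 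There is no ``sparse induced subset of $N(v)$ of size $\Omega(k)$'' to exploit in this example, and no amount of constant-tuning rescues a uniformly random color trial here. The hypothesis on the clique number gives you $\Omega(k)$ non-edges in such a neighborhood, not $\Omega(k\Delta)$, and a random trial converts non-edges into expected repeated colors only at rate $\Theta(1/\Delta)$ per non-edge.

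The paper's resolution is structural, not probabilistic: it first computes Reed's $2^{-4}k$-dense decomposition $S,X_1,\dots,X_t$ (Lemma~\ref{lem:decompo}). On the sparse part $S$, only the vertices of degree at least $\Delta-2^{-5}k$ in $S$ need repeated colors, and \emph{because they are sparse} their neighborhoods contain at least $2^{-5}k\Delta$ non-adjacent pairs, which is what makes the random trial work (Lemma~\ref{lem:sparse} with $\ell=2^{-5}k$). Each dense component $X_i$ is handled by an entirely different, essentially deterministic argument (Lemma~\ref{lem:dense}): the clique-number bound guarantees an antimatching of size $k/4$ in $X_i$, whose pairs are \emph{forced} to receive equal colors by contracting them and list-coloring the contracted graph; this manufactures $k/24$ repeated colors in the neighborhoods of the vertices dominating many antimatching pairs, and the remaining vertices of $X_i$ are colored in a carefully chosen order so that each has either a degree deficit or enough still-uncolored neighbors when its turn comes. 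Without some such separate mechanism for dense vertices, your single-phase argument cannot be completed.
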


Here, $\textsf{T}_{\de+\Omega(k)}$ denotes the round complexity of the
$(\de+\Omega(k))$-list coloring problem, which will be defined in
Section~\ref{sec:prel}.

\medskip

We start with some preliminaries on distributed computing, probability, and graph theory in Section~\ref{sec:prel}. We then
prove Theorem~\ref{thm:1} in Section~\ref{sec:thm1}. It turns out that
the proof of Theorem~\ref{thm:1} contains several ingredients that will
be reused in the proof of Theorem~\ref{thm:col}. In
Section~\ref{sec:main}, we prove Theorem~\ref{thm:sharp} and explain
how to adapt the proof of Theorem~\ref{thm:mr1} in~\cite{MR14} to
prove Theorem~\ref{thm:col}. We conclude with some remarks in Section~\ref{sec:conc}.

\section{Preliminaries}\label{sec:prel}

\subsection{Distributed computing}

We consider the classical \textsf{LOCAL} model of computation, which
is a distributed model in which the network corresponds to the graph
under consideration, i.e.\ each vertex of the graph corresponds to a processor, with
infinite computational power, and vertices can communicate with their
neighbors in synchronous rounds (in this model there is no restriction
on the size of the messages exchanged by two neighboring vertices during each round of communication). Each vertex knows the number $n$ of
vertices and its own
id (a distinct integer between $1$ and $n$). In this paper, the
vertices also know the maximum degree $\Delta$ of the graph, and some
number $c$ of colors. Once the
communication between the nodes is over, each vertex outputs a value
(in our case, an integer between $1$ and $c$ corresponding to its
color in a proper coloring of the graph, or some subset of its
neighbors which cannot be colored with $c$ colors). The
complexity of the algorithm is the number of rounds of communication.

\subsection{Vertex coloring}

A \emph{$c$-coloring} of a graph $G$ is an assignment of integers from
$\{1,\ldots,c\}$ to the vertices of $G$ such that any two adjacent
vertices receive distinct colors. The chromatic number $\chi(G)$ of
$G$ is the least $c$ such that $G$ has a $c$-coloring. 

In this paper it will be convenient to consider a slightly more
general scenario, in which the colors available for each vertex are
not necessarily the same. A \emph{list-assignment} $L$ for $G$ is a
collection of lists $L(v)$ of colors, one for each vertex $v$ of
$G$. Given a list-assignment $L$, an \emph{$L$-list-coloring} of $G$
is a coloring of $G$ (i.e.\ any two adjacent vertices receive distinct
colors, as before), with the additional constraint that each vertex
$v$ is colored with a color from its own list $L(v)$.

In the \emph{$(\de+f(\Delta))$-list coloring problem}, the given list-assignment $L$
is such that for each vertex $v$, $|L(v)|\ge d_G(v)+f(\Delta)$ (where
$d_G(v)$ denotes the degree of $v$ in $G$, and $f(\Delta)$ denotes
some function of $\Delta$, the
maximum degree of $G$). When $f(\Delta)\ge 1$, a simple
greedy algorithm shows that $G$ has an
$L$-list-coloring. This is a very useful generalisation of the fact that
any graph of maximum degree $\Delta$ is $(\Delta+1)$-colorable. Let
$\textsf{T}_{\de+f(\Delta)}(n,\Delta)$ be the randomized round complexity of the
$(\de+f(\Delta))$-list coloring problem in $n$-vertex graphs of maximum degree
$\Delta$ in the \textsf{LOCAL} model (i.e. assume that there is exists
a distributed randomized algorithm that solves the
$(\de+f(\Delta))$-list coloring problem in $n$-vertex graphs of maximum degree
$\Delta$ in $\textsf{T}_{\de+f(\Delta)}(n,\Delta)$ w.h.p.).

\smallskip

The following result was proved in~\cite{BEPS12}.

\begin{thm}[\cite{BEPS12}]\label{thm:lcol1}
  $\textsf{T}_{\de+1}(n,\Delta)=O(\log \Delta)+2^{O(\sqrt{\log \log n})}$.
\end{thm}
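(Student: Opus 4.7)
The plan is to apply the \emph{graph shattering} framework: a randomized pre-shattering phase that, in $O(\log \Delta)$ rounds, colors almost all vertices and leaves only small components of uncolored vertices, followed by a deterministic post-shattering phase that completes the coloring in parallel on each remaining component.

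For the pre-shattering phase, I would use a \emph{try-and-advance} procedure: in each round, every still-uncolored vertex $v$ independently picks a tentative color uniformly at random from its current list $L(v)$, and commits to it if no uncolored neighbor (say, with higher ID) picks a conflicting color. The critical invariant to maintain is that the residual graph $G'$ on uncolored vertices, together with the truncated lists, continues to satisfy $|L(v)| \geq d_{G'}(v)+1$: whenever a neighbor $u$ commits to color $c$, the edge $uv$ leaves $G'$ and $c$ leaves $L(v)$, preserving the slack. This alone guarantees that the procedure can always be completed greedily; and the $(\de+1)$-slack further implies that $v$ commits with probability $\Omega(1)$ per round (the expected number of conflicting neighbors is strictly less than $1$).

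A staged deferred-randomness argument --- equivalently, running $\Theta(\log \Delta)$ parallel independent ``successful-round'' trials per vertex --- then yields that after $O(\log \Delta)$ rounds every vertex survives uncolored with probability at most $\Delta^{-C}$, for any desired constant $C$, and moreover these survival events are nearly independent across vertices at graph distance greater than $4$. A union bound (the \emph{shattering lemma}) over connected subsets of $G^4$, of which there are at most $n(e\Delta^4)^s$ of size $s$, then shows that w.h.p.\ every connected component of the residual subgraph has polylogarithmic order in $n$.

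In the post-shattering phase, I would color each residual component in parallel using a deterministic $(\de+1)$-list coloring algorithm built from low-diameter network decompositions in the Panconesi--Srinivasan spirit, whose complexity on an $N$-vertex graph is $2^{O(\sqrt{\log N})}$. Plugging $N=\plog n$ and carefully bookkeeping the $\Delta$-dependence, this contributes $2^{O(\sqrt{\log\log n})}$ rounds on top of the $O(\log \Delta)$ pre-shattering cost, giving the stated complexity. The hardest step, I expect, is the shattering bound itself: the $(\de+1)$-slack provides only a \emph{borderline} success probability per round, so extracting the super-polynomial $\Delta^{-C}$ survival tail together with near-independence across well-separated vertices is delicate and forms the technical heart of the argument.
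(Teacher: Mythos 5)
The paper does not prove this statement: Theorem~\ref{thm:lcol1} is quoted from Barenboim, Elkin, Pettie and Schneider~\cite{BEPS12} and used as a black box (e.g.\ in Lemma~\ref{lem:8} and in the complexity accounting for Theorem~\ref{thm:col}), so there is no in-paper proof to compare against. What you have written is a reconstruction of the argument in the cited reference, and at the level of strategy it is the right one: randomized pre-shattering by iterated color trials, a shattering lemma via a union bound over connected subsets of a bounded power of $G$, and a deterministic, network-decomposition-based clean-up on the residual components.

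Two steps you gloss over are justified incorrectly as stated. First, ``the expected number of conflicting neighbors is strictly less than $1$'' only gives $\Pr[\text{fail}]\le d(v)/(d(v)+1)$ by the first-moment method, i.e.\ success probability at least $1/(d(v)+1)$, which is not $\Omega(1)$. The constant per-round success probability (of order $e^{-2}$) instead follows from a convexity estimate on $\sum_{c\in L(v)}\tfrac{1}{|L(v)|}\prod_{u\sim v}\bigl(1-\mathbf{1}[c\in L(u)]/|L(u)|\bigr)$, using $1-x\ge e^{-2x}$ and Jensen together with $\sum_{c\in L(v)}\sum_{u\sim v}\mathbf{1}[c\in L(u)]/|L(u)|\le d(v)$. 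Second, the shattering lemma leaves components of size $\mathrm{poly}(\Delta)\log n$, which is $\mathrm{polylog}(n)$ only when $\Delta$ is; for larger $\Delta$ you cannot simply ``plug $N=\mathrm{polylog}(n)$'' into the $2^{O(\sqrt{\log N})}$ deterministic bound and obtain $2^{O(\sqrt{\log\log n})}$, and handling that regime (reducing the effective ID space/degree on the residual components) is precisely where \cite{BEPS12} does additional work that your sketch omits.
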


The following stronger result was then proved in~\cite{ESH15}. 

\begin{thm}[\cite{ESH15}]\label{thm:lcolp}
 For any $\epsilon>0$,  $\textsf{T}_{\de+\epsilon\Delta}(n,\Delta)=O(\log(1/\epsilon))+2^{O(\sqrt{\log \log n})}$.
\end{thm}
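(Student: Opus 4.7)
The plan is to adopt the two-phase ``shatter-and-post-process'' paradigm that underlies Theorem~\ref{thm:lcol1}, using the extra slack $\epsilon\Delta$ to shorten the first (pre-shattering) phase from $O(\log\Delta)$ rounds down to $O(\log(1/\epsilon))$ rounds, while reusing the post-shattering machinery to produce the $2^{O(\sqrt{\log\log n})}$ term.

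For the first phase, I would iterate the following randomized color-trial procedure on the uncolored subgraph: every uncolored vertex $v$ picks a color uniformly at random from its current list $L(v)$ and keeps it permanently if no neighbor picks the same color. The key quantity to track is the relative slack $\alpha(v) = (|L(v)|-d(v))/|L(v)|$, which initially satisfies $\alpha(v)\ge \epsilon/(1+\epsilon)=\Omega(\epsilon)$. A short computation shows that the probability that a given color disappears from $L(v)$ (because some neighbor picks it) is at most $1-\alpha(v)$, whereas each neighbor of $v$ leaves the uncolored graph with probability at least $\min_u \alpha(u)$. Balancing these rates gives, in expectation, a new relative slack $\alpha'(v)\ge (1+c)\alpha(v)$ for a universal constant $c>0$; so $\alpha$ doubles per round and reaches a constant after $\Theta(\log(1/\epsilon))$ rounds. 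Concentration around these expectations is obtained via Talagrand's inequality applied locally in each neighborhood, and the bad events form a dependency hypergraph of $\poly(\Delta)$ degree, so a single pass of a distributed Lov\'asz Local Lemma algorithm guarantees that they are simultaneously avoided; this contributes the $\textsf{T}_{LLL}(n,\poly\,\Delta)$-type cost which is still absorbed in the claimed bound. Once $\alpha(v)=\Omega(1)$ for all surviving vertices, one further shattering round breaks the uncolored part into connected components of size at most $\plog n$ w.h.p.

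In the second phase, apply Theorem~\ref{thm:lcol1} to each such small component $H$ independently: since the residual problem on $H$ is still an instance of $(\de+1)$-list coloring and $|H|\le \plog n$, the BEPS bound evaluates to $O(\log \Delta) + 2^{O(\sqrt{\log\log |H|})} = 2^{O(\sqrt{\log\log n})}$ rounds, which is the claimed second term. Summing the two phases gives the stated complexity.

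The main obstacle is the concentration step in Phase~1: one must show that the relative slack grows by a \emph{multiplicative} constant per round, with the constant independent of $\epsilon$. An additive-only improvement would yield $O(1/\epsilon)$ rounds rather than $O(\log(1/\epsilon))$, so the sharpness of the Talagrand-plus-LLL analysis (with a parameterization that does not degrade as $\epsilon\to 0$) is what makes the bound non-trivial. A useful sanity check is the corner case $\epsilon\asymp 1/\sqrt{\Delta}$, where $\log(1/\epsilon)=\Theta(\log\Delta)$ and one recovers (up to constants) the BEPS bound of Theorem~\ref{thm:lcol1}.
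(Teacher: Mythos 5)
This theorem is not proved in the paper --- it is quoted from~\cite{ESH15} --- so your proposal can only be judged against what is actually needed, and there it has a genuine gap at the decisive step. You claim that one round of single-color trials multiplies the relative slack $\alpha(v)=(|L(v)|-d(v))/|L(v)|$ by $1+c$ for a \emph{universal} constant $c>0$, but the rates you state cannot deliver this. Write $e(v)=|L(v)|-d(v)$ for the absolute excess; one round of the trial never decreases $e(v)$ (each newly colored neighbor removes one from the degree and at most one color from the list), and your bound ``each neighbor leaves with probability at least $\min_u\alpha(u)$'' gives only $\Exp[d'(v)]\le (1-\alpha)\,d(v)$ with $\alpha=\Theta(\epsilon)$. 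Normalizing $|L(v)|=1$, so $d=1-\alpha$ and $e=\alpha$, this yields $\alpha'\ge \alpha/(\alpha+(1-\alpha)^2)$, i.e.\ a growth factor of $1+\Theta(\alpha)$, not $1+\Theta(1)$. Iterating $\alpha\mapsto\alpha(1+\Theta(\alpha))$ takes $\Theta(1/\epsilon)$ rounds to reach a constant --- exactly the outcome you flag as insufficient. This is not a concentration issue that a sharper Talagrand-plus-LLL argument could repair: the \emph{expected} progress under your stated rates is already too slow, and concentration can only reproduce the expectation.

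The missing ingredient is the universal success lemma for slack-$1$ instances: if $|L(u)|\ge d(u)+1$ for every uncolored $u$, then one round of ``pick a uniform color from $L(v)$, keep it if no neighbor picked the same color'' colors each vertex with probability at least $1/4$, \emph{independently of $\epsilon$} (by convexity, $\sum_{c\in L(v)}\tfrac{1}{|L(v)|}\prod_{w\sim v,\,c\in L(w)}(1-1/|L(w)|)\ge 4^{-d(v)/|L(v)|}\ge \tfrac14$). Hence $\Exp[d'(v)]\le \tfrac34 d(v)$: it is the \emph{degree}, not the relative slack, that contracts geometrically, while the absolute excess $e(v)\ge\epsilon\Delta$ is preserved. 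After $O(\log(1/\epsilon))$ rounds the residual degree falls below $\epsilon\Delta$, so $|L(v)|\ge 2d(v)$ and the per-round failure probability $d(v)/|L(v)|$ then decays super-geometrically; this is the correct route to the $O(\log(1/\epsilon))$ term (and your sanity check $\epsilon\asymp 1/\Delta$ recovering Theorem~\ref{thm:lcol1} is then literally the same algorithm). A secondary inaccuracy: ``one further shattering round'' once $\alpha=\Omega(1)$ is optimistic, since driving the per-vertex failure probability to $\Delta^{-\Theta(1)}$ under the recurrence $d_{i+1}\approx d_i^2/(\epsilon\Delta)$ still costs $O(\log\log\Delta)$ rounds --- harmless here only because it is absorbed by the $2^{O(\sqrt{\log\log n})}$ post-shattering term.
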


Note that Theorem~\ref{thm:lcol1} can be deduced from
Theorem~\ref{thm:lcolp} by simply setting $\epsilon=1/\Delta$.

\smallskip

The setting in which the $(\de+1)$-list coloring and $(\de+\epsilon\Delta)$-list
coloring  problems will be applied is the
following. Let $G$ be a graph of maximum degree $\Delta$ with a subset $S$ of vertices that are colored with
at most $c$ colors. We want to extend the $c$-coloring of $S$ to a
$c$-coloring of $G$ (i.e.\ find a $c$-coloring of $G$ that agrees with
the original coloring on $S$). 

Let $U=V(G)-S$ be the set of uncolored vertices, and for each vertex
$u \in U$, let $L(u)$ be the subset of colors from $1,\ldots,c$ that
do not appear among the neighbors of $u$ in $S$. Note that extending
the $c$-coloring of $S$ to a $c$-coloring of $G$ is the same as finding an $L$-list-coloring
of $G[U]$, the subgraph of $G$ induced by $U$.

Let us denote the degree of a vertex $u\in U$ in
$G[U]$ by $d_U(u)$. The following is a simple, yet very useful observation.

\begin{obs}\label{obs:1}
If $u\in U$ has at least $\ell$ repeated colors in its neighborhood,
then $|L(u)|-d_U(u)\ge c+\ell-d_G(u)\ge c+\ell-\Delta$.
\end{obs}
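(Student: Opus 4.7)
The plan is to prove the inequality by a direct counting argument on the colored neighbors of $u$. First I would count the colored neighbors: since $U$ is the set of uncolored vertices and $d_U(u)$ is the degree of $u$ in $G[U]$, the number of neighbors of $u$ that lie in $S$ (and hence are colored) equals $d_G(u)-d_U(u)$. Next I would interpret the phrase ``at least $\ell$ repeated colors in its neighborhood'' as meaning that the multiset of colors appearing on the colored neighbors of $u$ has cardinality at least $\ell$ larger than its underlying set; equivalently, the number of \emph{distinct} colors appearing among the colored neighbors of $u$ is at most $(d_G(u)-d_U(u))-\ell$.

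With this interpretation in hand, the bound on $|L(u)|$ is immediate. By definition $L(u)$ contains exactly those colors in $\{1,\ldots,c\}$ that do not appear on any colored neighbor of $u$, so
\[
|L(u)| \;\ge\; c - \bigl((d_G(u)-d_U(u)) - \ell\bigr) \;=\; c + \ell + d_U(u) - d_G(u).
\]
Rearranging this gives $|L(u)|-d_U(u) \ge c + \ell - d_G(u)$, and then using $d_G(u) \le \Delta$ yields the second inequality $c+\ell-d_G(u) \ge c+\ell-\Delta$. Both inequalities of the observation follow in one line.

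The only real content lies in settling on the right reading of ``$\ell$ repeated colors,'' and I do not anticipate any genuine obstacle: the statement is essentially a bookkeeping identity, with the factor $c$ tracking the palette size, the term $d_G(u)-d_U(u)$ tracking how many list positions could potentially be blocked, and the factor $\ell$ tracking the savings coming from collisions among the blocking neighbors. The inequality is sharp exactly when every color on a colored neighbor of $u$ is distinct from the $c$ available colors' ``used part,'' which is the generic case.
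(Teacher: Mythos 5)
Your proof is correct and is exactly the intended argument: the paper states this as an unproved observation, and the one-line counting you give (colored neighbors number $d_G(u)-d_U(u)$, of which at most $d_G(u)-d_U(u)-\ell$ distinct colors are blocked, so $|L(u)|\ge c+\ell+d_U(u)-d_G(u)$) is the justification it has in mind. Your reading of ``$\ell$ repeated colors'' as the excess of the multiset over its underlying set is consistent with how the quantity is used elsewhere in the paper (e.g.\ in the proof of Lemma~\ref{lem:sparse}), so there is no gap.
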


Hence, extending a $c$-coloring of $S$ to a $c$-coloring of $G$ will
amount to solving a $(\de+f(\Delta))$-list coloring problem in $G[U]$, where
$f(\Delta)$ will depend on the degrees of the vertices of $U$ in $G$
and number of repeated colors in their neighborhoods.

Note that this observation will sometimes be used without an explicit
number of repeated colors (i.e.\ $\ell=0$) and the statement
above simply becomes $|L(u)|-d_U(u)\ge c-d_G(u)$.

\subsection{Probabilistic tools}

Consider a set $X$ of independent random variables, and a set
$B=B_1,\ldots,B_n$ of (typically bad) events, each depending on a subset of the
variables from $X$. Consider the graph $H$ with vertex-set $B$, with
an edge between two events if the set of variables they depend on
intersect. The graph $H$ is called the \emph{event dependency graph}. Let $d\ge 2$ be the maximum degree of $H$, and let $p$ be the
maximum probability of an event from $B$. We say that a family
$\mathcal{B}$ of sets
$B$ as above satisfies
\emph{a polynomial criterion} if there are absolute constants $a,c>0$
such that $apd^c<1$, for each $B\in \mathcal{B}$. The classic
(symmetric version of the) Lov\'asz Local Lemma says that under some
polynomial criterion, we can find a value assignment to the variables of $X$ such
that no event from $B$ holds. Let $\textsf{T}_{LLL}(n,d)$ be the
randomized round
complexity of finding such a value assignment in $H$ in the
\textsf{LOCAL} model of computation (under some polynomial criterion,
as above).

\smallskip

We will use the following result.

\begin{thm}[\cite{CPS14, GHK17}]\label{thm:LLL}
$\textsf{T}_{LLL}(n,d)=\min\{O(\log_d n), 2^{O(\log d+\sqrt{\log \log n})}\}$.
\end{thm}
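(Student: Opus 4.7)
The plan is to prove the two sides of the minimum separately and then take the best of the two.

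For the $O(\log_d n)$ bound, I would implement a parallel Moser--Tardos resampling scheme. After absorbing constants into the criterion, so that for instance $epd^c \le 1$ for the constant $c$ fixed in $apd^c<1$, I first sample all variables of $X$ independently. In each phase, let $V$ be the set of currently violated bad events, compute a maximal independent set $I$ in the subgraph of $H$ induced by $V$ (doable in $O(\log d)$ rounds via Luby's algorithm), and resample in parallel all variables on which the events in $I$ depend. Since the events of $I$ are pairwise variable-disjoint, the resamplings do not interfere. A witness-tree argument in the spirit of Moser--Tardos then gives that the probability a fixed event is still violated after $T$ phases is at most $(epd^c)^{\Omega(T)}$. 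Union-bounding over the $n$ events, $T=\Theta(\log_d n)$ phases suffice w.h.p., which yields the claimed round count after folding in the per-phase MIS cost.

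For the $2^{O(\log d+\sqrt{\log\log n})}$ bound, I would use graph shattering. Run the parallel scheme above for $T_0=\Theta(\log d)$ phases and let $R$ be the residual set of still-violated events. The witness-tree machinery shows that, for any set $S$ of $s$ events inducing a connected subgraph of $H$, $\Pr[S\subseteq R]\leq (epd^c)^{\Omega(s)}$. Since the number of connected vertex sets of size $s$ in $H$ containing a fixed vertex is at most $d^{O(s)}$, a union bound over all connected sets of size $s=\Theta(\log n/\log d)$ shows that w.h.p.\ every connected component of the residual dependency graph has size $N=\poly(d)\cdot\log n$.

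On each such small component the residual instance is itself an LLL on $N$ events satisfying the same polynomial criterion, and it is now enough to solve it deterministically. For this I would compute a Panconesi--Srinivasan network decomposition of the residual dependency graph in $2^{O(\sqrt{\log N})}$ rounds, and then process the low-diameter clusters one colour class at a time with a deterministic LLL solver on each cluster. Substituting $N=\poly(d)\log n$ gives $2^{O(\sqrt{\log d+\log\log n})}=2^{O(\log d+\sqrt{\log\log n})}$ rounds; adding the $O(\log d)$ shattering phase leaves the asymptotic bound unchanged.

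The main technical hurdle is the shattering step: one has to show, uniformly over \emph{all} connected subsets $S$ of events, that $\Pr[S\subseteq R]$ decays exponentially in $|S|$ strongly enough to beat the $d^{O(|S|)}$ enumeration factor, and this is exactly where the polynomial slack in the criterion $apd^c<1$ is used (together with the $T_0=\Theta(\log d)$ choice of the shattering depth). Once shattering is in hand, the post-shattering deterministic phase reduces to a fairly black-box application of network decomposition, so the real work is packaging the witness-tree counting so that the enumeration over connected subgraphs is absorbed cleanly.
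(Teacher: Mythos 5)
The paper does not reprove this theorem: it simply cites \cite{CPS14} for the bound $O(\log_{1/epd^2}n)\le O(\log_d n)$ (observing that under the polynomial criterion $epd^3<1$ one has $1/epd^2\ge d$) and \cite{GHK17} for the bound $2^{O(\log d+\sqrt{\log\log n})}$. Your reconstruction of the second bound via shattering to components of size $\poly(d)\log n$ followed by network decomposition and a deterministic per-cluster LLL solver is indeed the route taken in the cited work, and the arithmetic $2^{O(\sqrt{\log d+\log\log n})}\le 2^{O(\log d+\sqrt{\log\log n})}$ is fine.

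Your derivation of the first bound, however, has a genuine quantitative gap in the round accounting. Computing a maximal independent set of the violated events in each phase is not free: Luby's algorithm runs in $O(\log n)$ rounds (not $O(\log d)$), and even the best known MIS algorithms on degree-$d$ graphs need $O(\log d)+2^{O(\sqrt{\log\log n})}$ rounds. With $\Theta(\log_d n)=\Theta(\log n/\log d)$ resampling phases, a per-phase cost of even $O(\log d)$ already yields $O(\log n)$ total rounds, not $O(\log_d n)$. This difference matters downstream in the paper, where $\textsf{T}_{LLL}(n,\poly\Delta)=O(\log_\Delta n)$ is what produces the $O(\log n/\log\log n)$ bound in Theorem~\ref{thm:1}. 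The fix, which is exactly what \cite{CPS14} does, is to replace the MIS by a constant-round selection rule: resample every violated event whose identifier is a local minimum among its violated neighbors in the dependency graph. The resampled events are automatically variable-disjoint, each phase costs $O(1)$ rounds, and the witness-tree analysis under $epd^2<1$ shows that $O(\log_{1/epd^2}n)$ phases suffice w.h.p.; a polynomial criterion with one extra factor of $d$ of slack then converts this to $O(\log_d n)$. So the skeleton of your argument is right, but as written it proves only the weaker bound $O(\log n)$ for the first term of the minimum.
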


\begin{proof}
It was proved in~\cite{CPS14} that if $epd^2<1$, there is a distributed randomized algorithm,
running in $H$ in $O(\log_{1/epd^2} (n))$ rounds w.h.p., that finds
a value assignment to the variables of $X$ such
that no event from $B$ holds. Note that under the stricter (but still
polynomial) criterion
$epd^3<1$ we have $1/epd^2\ge  d$ and thus  $O(\log_{1/epd^2}
(n))\le O(\log_d n)$. The second part of the bound,
$\textsf{T}_{LLL}(n,d)=2^{O(\log d+\sqrt{\log \log n})}$, was proved
  in~\cite{GHK17} (under the criterion $2^{15}pd^8<1$).
\end{proof}

It should be noted that in each subsequent application of the
distributed Lov\'asz Local Lemma, the event dependency graph $H$ will only be considered implicitly. The reason is that the variables of $X$ will
be associated to the vertices of some other graph $G$, and the events from $B$ will
correspond to connected subgraphs of $G$ of constant radius. Thus, the
outcome the distributed Lov\'asz Local Lemma will be computed in $G$ directly (the
round complexity is then simply multiplied by a constant, which does not  change
the asymptotic complexity).

\medskip

We shall also use the following version of Talagrand's inequality
(see the appendix in~\cite{MR14}).

\begin{thm}[Talagrand's Inequality]\label{thm:tal}
Let $X$ be a non-negative random variable whose value is determined by $n$ independent trials $T_1$,\ldots,$T_n$ and satisfying the following for some $c$,$r\geq 0$ :
\begin{itemize}
\item changing the outcome of any one trial changes the value of $X$ by at most $c$.
\item for any $s$, if $X\geq s$ then there is a set of at most $rs$
  trials whose outcomes certify $X\geq s$.
\end{itemize}
Then for any $t \ge 0$, 
$$\mathbb{P}\left(\lvert X - \mathbb{E}(X) \rvert > t + 20c\sqrt{r\mathbb{E}(X)}+64c^2r\right) \leq 4\cdot \exp \left(-\frac{t^2}{8c^2r(\mathbb{E}(X)+t)} \right)$$
\end{thm}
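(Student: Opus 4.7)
The plan is to derive this certifiable-functions form of Talagrand's inequality from the more abstract Talagrand convex distance inequality, which I would take as the starting point. Recall that for a product probability space on $\Omega_1 \times \cdots \times \Omega_n$ and any measurable $A$, the Talagrand convex distance is
\[
d_T(x,A) \;=\; \sup_{\alpha \in \RR_{\ge 0}^n,\ \|\alpha\|_2 \le 1}\ \inf_{y\in A}\ \sum_{i:\, x_i\ne y_i} \alpha_i,
\]
and the convex distance inequality states $\Pr(A)\cdot \Exp[\exp(d_T(X,A)^2/4)] \le 1$. Its proof (an induction on $n$ exploiting the product structure) is the deep ingredient; I would cite it rather than redo it, since the work of the present theorem is the \emph{translation step} from $d_T$ to a bound on the function $X$.

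Next, I would convert the two hypotheses (Lipschitz with constant $c$, and certifiability with parameter $r$) into a lower bound on $d_T(x,A)$ for level sets. Fix $s$ and take $A = \{y : X(y) \le s - t'\}$. Suppose $x$ satisfies $X(x) \ge s$. By the certifiability hypothesis there is an index set $I$ with $|I| \le rs$ such that changing any coordinates outside $I$ leaves $X$ at least $s$. By the Lipschitz hypothesis, for any $y \in A$ we must flip at least $t'/c$ of the coordinates indexed by $I$ to drive $X$ below $s-t'$. Plugging in the test vector $\alpha_i = |I|^{-1/2}\mathbf{1}_{i\in I}$ (which has $\|\alpha\|_2=1$) gives
\[
d_T(x,A)\ \ge\ \frac{t'/c}{\sqrt{|I|}}\ \ge\ \frac{t'}{c\sqrt{rs}}.
\]
Therefore $\{X\ge s\}$ lies in the set where $d_T(\cdot,A)\ge t'/(c\sqrt{rs})$, and the convex distance inequality yields
\[
\Pr(X\le s-t')\cdot \Pr(X\ge s)\cdot \exp\!\left(\frac{t'^2}{4c^2 r s}\right)\ \le\ 1.
\]

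I would now choose $s$ to be a median $m$ of $X$, so $\Pr(X\ge m)\ge 1/2$, giving the one-sided tail $\Pr(X \le m - t') \le 2\exp(-t'^2/(4c^2 r m))$; the symmetric argument (swap the roles of the level sets) gives $\Pr(X \ge m + t') \le 2\exp(-t'^2/(4c^2 r (m+t')))$. The final routine step is to relate the median to the mean: integrating the tail bounds one obtains $|m-\Exp X| \le Kc\sqrt{r\,\Exp X} + K'c^2 r$ for explicit constants $K,K'$, and absorbing this shift into the deviation produces the stated correction $20c\sqrt{r\Exp X} + 64c^2 r$ and the denominator $\Exp X + t$ in the exponent (the latter coming from using $m+t$ in place of $m$ in the upper tail). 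The only subtle part is the bookkeeping to obtain precisely the numerical constants $20$ and $64$; the main conceptual obstacle is the convex distance inequality itself, but that is used as a black box, so the genuine work here is the Lipschitz-plus-certifiability argument producing the lower bound $d_T(x,A)\ge t'/(c\sqrt{rs})$.
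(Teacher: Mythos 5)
Your proposal is correct and follows the standard derivation (convex distance inequality, level-set lower bound via Lipschitz-plus-certifiability, then median-to-mean conversion), which is exactly the argument in the appendix of~\cite{MR14} that the paper cites for this statement rather than proving it itself. The only part you leave implicit is the numerical bookkeeping producing the constants $20$ and $64$ and the $\mathbb{E}(X)+t$ denominator, which is precisely the adjustment that reference carries out.
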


\subsection{The dense decomposition}

The graph decomposition described in this section is due to Reed~\cite{Ree98} (see
also~\cite{MR02,MR14}). A somewhat similar (although not completely equivalent)
decomposition was recently used by Harris, Schneider, and Su~\cite{HSS16}
(see also~\cite{CLP18}) in the context of distributed $(\Delta+1)$-coloring
algorithms.

Consider a graph $G=(V,E)$ of maximum degree $\Delta$. We call a vertex \textit{$d$-dense} if its neighborhood has more than
$\binom{\Delta}{2}-d\Delta$ edges (note that $d$ might depend on
$\Delta$). A vertex $v$ that is not $d$-dense is said to be
\emph{$d$-sparse}. 

\medskip

We say that $S,X_1,X_2,\ldots,X_t$ is a \emph{$d$-dense decomposition
  of $G$} if each of the following holds:
\begin{enumerate}
\item $S,X_1,X_2,\ldots,X_t$ partition $V$;
\item every $X_i$ has between $\Delta - 8d$ and $\Delta+4d$ vertices;
\item there are at most $8d\Delta$ edges between $X_i$ and $V-X_i$;
\item a vertex is adjacent to at least $\frac{3\Delta}{4}$ vertices of
  $X_i$ if and only if it is in $X_i$;
\item every vertex in $S$ is $d$-sparse.
\end{enumerate}

The sets $X_i$ are called the \emph{dense components} and $S$ is
called the \emph{sparse component}.
Note that a simple consequence of (4) and (2) is that each dense component has diameter at
most 2, provided that $d\le \tfrac{\Delta}{8}$.

\begin{lem}\label{lem:decompo}
A $d$-dense decomposition of $G$ can be constructed in $O(1)$ rounds for every $d\leq \frac{\Delta}{100}$.
\end{lem}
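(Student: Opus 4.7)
The plan is to make the construction purely local, so that each vertex can decide in $O(1)$ rounds both whether it is $d$-sparse (and thus belongs to $S$) and, if it is $d$-dense, which component $X_i$ contains it. First, every vertex $v$ collects the subgraph induced on $N(v)$ in two rounds (since these edges lie in the $2$-hop neighborhood of $v$) and tests whether $e(N(v))>\binom{\Delta}{2}-d\Delta$; this labels $v$ as $d$-dense or $d$-sparse. What remains is to partition the $d$-dense vertices.

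The central observation is that a $d$-dense vertex has an \emph{almost-clique} neighborhood: at most $d\Delta$ potential edges inside $N(v)$ are missing, so all but at most $2d$ vertices of $N(v)$ already have $\Delta-O(d)$ neighbors inside $N[v]$. Following Reed, I would define, for every $d$-dense vertex $v$, the tentative set
\[
A(v)=\bigl\{u\in N[v]:u\text{ is }d\text{-dense and }|N[u]\cap N[v]|\ge\Delta-\alpha d\bigr\},
\]
for an appropriately chosen constant $\alpha$ (think $\alpha\approx 4$). Every $d$-dense vertex then declares that its component is the set $A(w)$ where $w$ is the vertex of smallest id lying in $A(v)$, say; this canonical representative is computable from the $2$-hop neighborhood. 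Finally, one adds to each component every vertex (dense or sparse) that happens to have at least $3\Delta/4$ neighbors in it, and defines $S$ to be the remaining vertices. All of these steps use only constant-radius information.

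The verification reduces to a handful of counting arguments using the density defect $d\Delta$. To get (2) and (3), each $A(v)$ has $\Delta+1\pm O(d)$ vertices because at most $2d$ vertices of $N[v]$ can fail the threshold $\Delta-\alpha d$, and adjusting for external vertices absorbed via the $3\Delta/4$ rule places $|X_i|$ in $[\Delta-8d,\Delta+4d]$; the bound $O(d\Delta)$ on cross-edges then follows because each vertex in an $X_i$ has at most $O(d)$ neighbors outside. Property (4) holds by design, using $d\le \Delta/100$ so that $3\Delta/4$ is comfortably between $\alpha d$ and $\Delta-\alpha d$. Property (5) is automatic. The main obstacle will be establishing consistency of the partition: if two $d$-dense vertices $u,v$ both want to claim a common vertex, one must show that their $A$-sets coincide up to the tie-breaking rule. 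This amounts to showing that whenever $A(u)\cap A(v)$ is large, $u$ and $v$ share all but $O(d)$ of their closed neighborhoods, which is a clean consequence of both being $d$-dense together with the defining inequality for membership in $A(\cdot)$. Once calibrated, the slack in $d\le \Delta/100$ lets the common representative of $u$ and $v$ be computed locally by each, yielding a well-defined partition.
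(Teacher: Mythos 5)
Your overall plan is the same as the paper's: each dense vertex inspects its radius-$2$ ball, forms a tentative cluster, absorbs every vertex with at least $3\Delta/4$ neighbours in it, and resolves ownership by a smallest-id rule. The difference is that the paper builds the cluster $D_v$ by an iterative procedure (start from $N[v]$, repeatedly delete vertices with fewer than $3\Delta/4$ neighbours inside, then repeatedly add outside vertices with at least $3\Delta/4$ neighbours inside) and invokes Lemma 15.2 of Molloy--Reed for its properties, the decisive one being (v): if $D_x\cap D_y\neq\emptyset$ for dense $x,y$, then $x\in D_y$ and $y\in D_x$. That property is exactly what makes the min-id assignment consistent (all dense vertices of a cluster compute the same set of candidate owners, hence the same minimum), and even then a second verification round is needed so that sparse vertices do not join a cluster whose owner abandoned it. In your write-up this consistency step is precisely the part you defer (``the main obstacle''), and the one-shot definition of $A(v)$ does not deliver it: membership $u\in A(v)$ requires $u\in N[v]$, and two vertices $u,w$ of the same dense neighbourhood need not be adjacent, so $A(\cdot)$ is symmetric but not transitive. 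The minimum-id element of $A(v)$ and of $A(u)$ can differ for two vertices $u,v$ that ought to land in the same component, the resulting sets $A(w)$ overlap without coinciding, and the size and cross-edge bounds you prove for the sets $A(w)$ do not transfer to whatever partition the tie-breaking actually produces. This is a genuine gap, not a routine calibration.

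There is also a quantitative error in the counting you do rely on. From $v$ being $d$-dense you get that the sum over $u\in N(v)$ of the number of non-neighbours of $u$ inside $N(v)$ is at most $2d\Delta$; hence the number of $u$ failing a threshold of the form $|N[u]\cap N[v]|\ge\Delta-\alpha d$ (i.e.\ at most $\alpha d$ missing) is only bounded by $2d\Delta/(\alpha d)=2\Delta/\alpha$, which is $\Theta(\Delta)$, not the ``at most $2d$'' you claim. So $|A(v)|$ need not be $\Delta+1-O(d)$ at all. To get an $O(d)$ bound on the exceptional vertices the threshold must allow $\Omega(\Delta)$ missing neighbours (this is why the construction works with $3\Delta/4$ rather than $\Delta-O(d)$), and the tight size bounds $\Delta-8d\le|X_i|\le\Delta+4d$ then come out of the iterative procedure, not out of a single thresholding of $N[v]$. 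Both issues are repaired in the paper by quoting the Molloy--Reed cluster lemma and adding the explicit Phase~2 consistency check; if you want a self-contained proof you would need to reprove that lemma, in particular its property (v).
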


\begin{proof}
Each $d$-dense vertex $v$ learns its neighborhood at distance 2 and applies the following procedure in parallel
in order to build a cluster $D_v$:

\smallskip

\noindent {\bf Phase 1.}

\begin{itemize}
\item[\textbf{1. }] $D_v = v\cup N(v)$
\item[\textbf{2. }] While there is some vertex $u$ in $D_v$ with $|N(u)\cap D_v| < \frac{3\Delta}{4}$, remove $u$ from $D_v$.
\item[\textbf{3. }] While there is some vertex $u$ outside $D_v$ with
  $|N(u)\cap D_v| \geq \frac{3\Delta}{4}$, add $u$ to $D_v$.
\end{itemize}

Note that only vertices at distance at most two from $v$ are added or
removed from $D_v$, so this 3-step procedure can indeed be performed in $O(1)$ rounds. It
follows from Lemma 15.2
in~\cite{MR02} that (i) $v\in D_v$, (ii) every vertex $x$ is in $D_v$ if and only if
$|N(x)\cap D_v|\geq \frac{3\Delta}{4}$, (iii) $\Delta -8d \leq |D_v|
\leq \Delta+4d$, (iv) there are at most $8d\Delta$ edges between $D_v$ and $V-D_v$, and (v) if $x$ and $y$ are two $d$-dense vertices and
$D_x\cap D_y\neq \emptyset$ then $x\in D_y$ and $y \in D_x$.

\smallskip

\noindent{\bf Phase 2.} Now, every $d$-sparse vertex that is not in any cluster $D_v$ \emph{joins} the
set $S$, and every other (sparse or dense) vertex $v$ considers the $d$-dense vertex $u$
with smallest id such that $v\in D_u$, and \emph{joins} $D_u$ (while leaving
all the other sets $D_w$ it was part of).
After this step, each dense vertex $v$ sends to its neighbors at
distance at most two the id of the set $D_u$
it joined, and each sparse vertex $v$ checks whether the set $D_u$
it joined during Phase 2 is such that $u$ also joined $D_u$ during
Phase 2. If this is the case $v$ remains
in $D_u$, and if not $v$ joins $S$.

\smallskip

We now prove that $S$ together with the resulting non-empty clusters $D_v$ form
the desired $d$-dense decomposition of $G$.
We first note that these sets partition $V$, as each vertex joining
a cluster $D_u$ also leaves all the other clusters it was part
of. Observe now that property (v) above implies that if some $d$-dense
vertex $v$ does not join $D_v$ during Phase 2, then no $d$-dense
vertex of $D_v$ joins $D_v$, and since the $d$-sparse vertices of
$D_v$ join $S$, then $D_v$ is empty after Phase 2. On the other hand,
property (v) implies that if $v$ joined $D_v$ during Phase 2, then all
vertices of $D_v$ (sparse or dense) also join $D_v$ during this
phase. Using properties (i)--(v) above, this concludes the proof of Lemma~\ref{lem:decompo}. 
\end{proof}

\section{Graphs with small clique number}\label{sec:thm1}

In this section we prove Theorem~\ref{thm:1}. The proof is a simple combination of ideas developed in the proofs of
Lemmas 10 and 16 in~\cite{MR14} (see also Section 10.3
in~\cite{MR02}). The proofs
there are given for a slightly different range of parameters, so we
decided to
include the full proof here instead of simply pointing to appropriate
parts of their results. More specifically, we will need the following
two results.

\begin{lem}\label{lem:sparse}
Let $G$ be a graph of (sufficiently large) maximum degree $\Delta$ and let $\ell\ge 2^{54}\log
\Delta$. Then there is a distributed randomized algorithm that finds a
partial coloring of $G$ with $\Delta/2$ colors in
$\textsf{T}_{LLL}(n,\poly \Delta)$ rounds w.h.p., such that for each
uncolored vertex $v$ with at least $\ell\Delta$ pairs of non-adjacent
vertices in $N(v)$, there are more than $2^{-18}\ell$ repeated colors in $N(v)$.
\end{lem}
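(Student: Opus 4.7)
\medskip

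\noindent\textbf{Proof Plan.} The plan is to use the classical Naive Random Coloring Procedure: each vertex of $G$ independently picks a color uniformly at random from $\{1,\ldots,\Delta/2\}$, and any vertex whose color is also picked by some neighbor loses its color. The resulting partial coloring is proper, and the main work is to verify that for every vertex $v$ with at least $\ell\Delta$ non-adjacent pairs in $N(v)$, the number $Z_v$ of colors appearing on at least two retained vertices of $N(v)$ is more than $2^{-18}\ell$, with high probability. The algorithm will then be a single call to the distributed Lov\'asz Local Lemma (Theorem~\ref{thm:LLL}) applied to this random experiment, whose bad events depend only on the colors of vertices at distance at most $2$ from each $v$.

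\medskip

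\noindent\textbf{First moment.} By linearity, $\mathbb{E}[Z_v]=\sum_{c=1}^{\Delta/2}\Pr[|N_c^{\mathrm{ret}}(v)|\ge 2]$, where $N_c^{\mathrm{ret}}(v)$ is the set of neighbors of $v$ retaining color $c$. For a fixed color $c$, I would restrict attention to the event that some specific non-adjacent pair $(u,w)\subseteq N(v)$ both pick $c$ and both retain, using Bonferroni to lower bound by the sum of such events: $\Pr[c(u)=c(w)=c, \text{both retain}]\ge (2/\Delta)^{2}(1-2/\Delta)^{|N(u)\cup N(w)|-2}\ge 4e^{-4}/\Delta^{2}$. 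Summing over the $\ge \ell\Delta$ non-adjacent pairs (and then over $c$), with care for the subdominant collision terms, yields $\mathbb{E}[Z_v]\ge c_{0}\,\ell$ for some absolute constant $c_{0}$ (a careful computation should give $c_0$ close to $2e^{-6}$), with constants chosen so that a constant-factor loss in the concentration step still leaves us above the $2^{-18}\ell$ threshold.

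\medskip

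\noindent\textbf{Concentration.} Next I would apply Talagrand's inequality (Theorem~\ref{thm:tal}) to $Z_v$, viewed as a function of the color choices of vertices at distance at most $2$ from $v$. Changing one vertex's color can only affect $|N_{c_1}^{\mathrm{ret}}(v)|$ and $|N_{c_2}^{\mathrm{ret}}(v)|$ (for the old and new color), and each indicator $\mathbb{1}[|N_c^{\mathrm{ret}}(v)|\ge 2]$ changes by at most one, so the Lipschitz constant satisfies $c\le 2$. To keep the certificate exponent small, I would precondition on the high-probability event $\mathcal{H}$ that no vertex has more than $K=O(\log\Delta)$ neighbors picking a common color (this follows from Chernoff and can be enforced as an auxiliary LLL constraint); under $\mathcal{H}$, a certificate for $Z_v\ge s$ uses $O(sK)=O(s\log\Delta)$ trials (two picks per repeated color plus their $\le K$-many same-color neighbor checks). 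Plugging $r=O(\log\Delta)$ and $\mathbb{E}[Z_v]\ge c_0\ell$ into Talagrand and choosing $t=c_0\ell/2$ gives $\Pr[Z_v\le c_0\ell/2-O(\sqrt{\ell\log\Delta})]\le \exp(-\Omega(\ell/\log\Delta))$. Since $\ell\ge 2^{54}\log\Delta$, the additive correction $\sqrt{\ell\log\Delta}$ is at most $\ell/2^{27}$, and the exponent $-\Omega(\ell/\log\Delta)\le -\Omega(2^{54})$ gives the bad-event probability $\Pr[B_v]$ bounded by an arbitrarily small polynomial $1/\Delta^{\alpha}$.

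\medskip

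\noindent\textbf{LLL and main obstacle.} The bad event $B_v=\{Z_v\le 2^{-18}\ell\}$ (together with the auxiliary event enforcing $\mathcal{H}$ near $v$) depends only on the color choices of vertices within distance $2$ of $v$, so two bad events $B_v$ and $B_{v'}$ can only be dependent if $v,v'$ are within distance $4$, making the event dependency graph of maximum degree $d\le \mathrm{poly}(\Delta)$. Choosing the constants so that $\Pr[B_v]\cdot d^{O(1)}\ll 1$ satisfies a polynomial criterion, Theorem~\ref{thm:LLL} produces a valid coloring (simultaneously satisfying every $v$) in $\textsf{T}_{LLL}(n,\mathrm{poly}\,\Delta)$ rounds with high probability. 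The main obstacle is really the concentration step: the certificate for $Z_v\ge s$ requires verifying retention of $2s$ specific vertices, which in the worst case means $\Theta(s\Delta)$ trials and gives Talagrand concentration too weak to be useful when $\ell\ll \Delta$. Side-stepping this by preconditioning on the high-probability regularity event $\mathcal{H}$ is what cuts the certificate down to $O(s\log\Delta)$, and this $\log\Delta$ factor is exactly what forces the hypothesis $\ell\ge 2^{54}\log\Delta$ in the statement.
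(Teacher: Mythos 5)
Your overall architecture (wasteful random coloring, a first-moment bound of order $\ell$, Talagrand for concentration, then one application of the distributed LLL over bad events of radius $2$) matches the paper's proof, and your expectation computation is essentially the paper's bound $\Exp \ge \ell\Delta\cdot\tfrac{1}{C}(1-\tfrac1C)^{3\Delta}=\Omega(\ell)$. The gap is in the concentration step. You correctly identify the obstacle --- certifying $Z_v\ge s$ requires certifying that $2s$ vertices \emph{retained} their colors, and retention is a negative event (``no neighbor of $u$ picked $c$''), which in the worst case needs $\Theta(s\Delta)$ trials --- but your proposed fix does not work. Conditioning on the event $\mathcal{H}$ (no color is picked by more than $K=O(\log\Delta)$ neighbors of any vertex) does not shrink the certificate: a Talagrand certificate is a set of trials whose \emph{outcomes alone} force $X\ge s$ no matter how the remaining trials come out, and after revealing only $O(sK)$ trials an unrevealed neighbor of $u$ could still have picked $c$ and destroyed retention. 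Knowing $\mathcal{H}$ holds tells you few neighbors conflict, but not \emph{which} ones, so you cannot point to a small set of trials that witnesses the absence of a conflict. Conditioning on $\mathcal{H}$ also destroys the independence of the trials that Theorem~\ref{thm:tal} requires, and the detour $\Pr[B_v]\le\Pr[B_v\cap\mathcal H]+\Pr[\neg\mathcal H]$ does not rescue the argument because the certificate problem is structural, not probabilistic.

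The paper's fix avoids certifying retention altogether: it counts pairs $u,w$ of non-adjacent neighbors of $v$ that receive a common color not assigned to any other neighbor of $u$, $v$, or $w$ (a lower bound on the number of repeated colors), and writes this as a difference $Y_v-Z_v$, where $Y_v$ is the number of colors assigned to some non-adjacent pair in $N(v)$ and $Z_v$ is the number of such colors also assigned to a further neighbor of $u$, $v$, or $w$. Both are ``positive'' counts: $Y_v\ge s$ is certified by $2s$ color assignments and $Z_v\ge s$ by $3s$, so Talagrand applies with $c=1$ and $r=2$ or $3$, giving deviation probability $\exp(-\Omega(\ell))$ for each, hence for the difference. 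Note also that the hypothesis $\ell\ge 2^{54}\log\Delta$ is not forced by certificate size (the certificates have constant $r$); it is needed so that $\exp(-\Omega(\ell))\le\Delta^{-2^6}$, which is what the polynomial LLL criterion against a dependency degree of $\poly\Delta$ demands. To complete your proof you should replace the conditioning-on-$\mathcal H$ device by this difference decomposition (or an equivalent ``Talagrand with exceptional outcomes'' argument applied correctly).
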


\begin{lem}\label{lem:dense}
Let $S,X_1,\ldots,X_t$ be a $2^{-4}k$-dense decomposition of a graph $G$ of
maximum degree $\Delta\ge 30k$ and clique number at most
$\Delta-k$. Then there is a distributed randomized algorithm that extends any $c$-coloring of $S$ with $c\ge\Delta-k/48$ colors to a $c$-coloring of $G$  in
$O(\textsf{T}_{\de+\Omega(k)}(n,\Delta))$ rounds, w.h.p.
\end{lem}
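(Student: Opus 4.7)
The plan is to use the non-edges inside each dense component $X_i$ to create $\Omega(k)$ colour repetitions in the neighbourhood of every uncoloured vertex, and then finish with a single call to the $(\de+\Omega(k))$-list colouring algorithm of Theorem~\ref{thm:lcolp}. Two structural facts drive the construction. Since $d_{X_i}(v)\ge 3\Delta/4$ for every $v\in X_i$ (property~(4)), we have $d_S(v)\le \Delta/4$, $|L(v)|\ge c-\Delta/4\ge \tfrac{3}{4}\Delta-k/48$, and $|L(u)\cap L(w)|\ge c-\Delta/2=\Omega(\Delta)$ for every non-edge $\{u,w\}\subseteq X_i$. Moreover, iteratively picking a non-edge of $G[X_i]$ and removing its two endpoints (possible while the remaining subgraph has more than $\omega(G[X_i])\le \Delta-k$ vertices) yields a matching of at least $\lceil(|X_i|-\omega)/2\rceil\ge \lceil k/4\rceil$ non-edges of $G[X_i]$; a leader of $X_i$ (whose diameter is at most $2$ since $2^{-4}k\le \Delta/8$) can compute any local object in $O(1)$ rounds.

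The algorithm proceeds in three phases. \emph{Phase (i).} In each $X_i$, construct a matching $M_i$ of $\Omega(k)$ non-edges of $G[X_i]$ such that, for every $v\in X_i$ with $d_G(v)>\Delta-k/100$, either $v\in V(M_i)$ or the number $b(v)$ of pairs of $M_i$ entirely inside $N(v)$ is $\Omega(k)$. Call $v$ \emph{risky} if $d_G(v)>\Delta-k/100$ and $v$ has more than $k/100$ non-neighbours in $X_i$. The leader of $X_i$ first pairs each risky vertex greedily with one of its at least $k/100$ non-neighbours not yet used by the matching (this succeeds because property~(3) bounds the total number of outside edges and thus the number of risky vertices), and then extends the matching to size $\Omega(k)$ by greedy addition of non-edges between non-risky vertices. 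For every non-risky $v\in X_i\setminus V(M_i)$, its at most $k/100$ non-neighbours in $X_i$ can intersect at most $k/100$ pairs of $M_i$, so $b(v)\ge |M_i|-k/100=\Omega(k)$. \emph{Phase (ii).} Colour the matched pairs by running Theorem~\ref{thm:lcolp} (with $\epsilon=\Theta(1)$) on the auxiliary graph whose vertices are the pairs of $M=\bigcup_iM_i$, whose edges connect pairs that are adjacent in $G$, and whose list at $\{u,w\}$ is $L(u)\cap L(w)$; the list sizes are $\Omega(\Delta)$ while the auxiliary graph has maximum degree $O(\Delta)$, so this call runs in $2^{O(\sqrt{\log\log n})}$ rounds, after which the chosen colour is assigned to both endpoints of each pair. \emph{Phase (iii).} For every $v$ in the uncoloured set $U':=V\setminus(S\cup V(M))$, either $d_G(v)\le\Delta-k/100$, so
\[
|L'(v)|-d_{U'}(v)\;\ge\;c-d_G(v)\;\ge\;k/100-k/48=\Omega(k),
\]
or $b(v)\ge \Omega(k)$, and by Observation~\ref{obs:1},
\[
|L'(v)|-d_{U'}(v)\;\ge\;c+b(v)-d_G(v)\;\ge\;-k/48+\Omega(k)=\Omega(k).
\]
A single call to Theorem~\ref{thm:lcolp} with $\epsilon=\Omega(k/\Delta)$ now completes the colouring of $G[U']$ in $O(\textsf{T}_{\de+\Omega(k)}(n,\Delta))$ rounds, absorbing Phase~(ii).

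The main technical obstacle is the matching construction in Phase~(i): a vertex $v$ with $d_G(v)$ close to $\Delta$ has only the negligible list slack $c-d_G(v)=-k/48$, so if its non-neighbours in $X_i$ happen to exhaust any random matching then $v$ must itself be matched; this is the role of the ``risky'' designation, and the bound on the number of outside edges in property~(3) is what keeps the set of risky vertices manageable. Non-risky vertices are handled automatically because their at most $k/100$ non-neighbours in $X_i$ can block at most $k/100$ of the $\Omega(k)$ matched pairs. Finally, the slack $c\ge \Delta-k/48$ (as opposed to $c\ge \Delta-\Omega(k)$) is precisely what accommodates the up to $\Delta/4$ distinct colours that a vertex of $X_i$ may see in $S$.
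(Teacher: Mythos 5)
Your overall architecture (find $\Omega(k)$ disjoint non-edges in each $X_i$, colour the two endpoints of each pair alike, use the resulting repeated colours as list slack, finish with one $(\de+\Omega(k))$-list colouring call) is the right starting point and matches the paper's first step, but Phase~(i) has a genuine gap: it is simply not true that every vertex of near-maximum degree can be either saturated by a matching of $\overline{G[X_i]}$ or made to dominate $\Omega(k)$ matched pairs. Concretely, take $|X_i|=\Delta-k/2$ and let $G[X_i]$ be complete except for an independent set $P$ of size $k+1$; give each vertex of $P$ exactly $3k/2+1$ neighbours outside $X_i$, so that every vertex of $P$ has degree exactly $\Delta$ in $G$. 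This is consistent with all the hypotheses: $\omega(G[X_i])=\Delta-3k/2\le\Delta-k$, and the roughly $\tfrac32 k^2\le k\Delta/2$ outgoing edges respect property~(3). All $k+1$ vertices of $P$ are risky, their only non-neighbours in $X_i$ are each other, and since $|P|$ is odd \emph{every} matching of $\overline{G[X_i]}$ leaves some $v^*\in P$ unmatched; moreover every matched pair consists of two non-neighbours of $v^*$, so $b(v^*)=0$. In Phase~(iii) this $v^*$ has slack $c-d_G(v^*)=-k/48<0$ and the list-colouring call fails. Your justification that ``property~(3) bounds the number of risky vertices'' is also false in general: when $|X_i|>\Delta$, a vertex can have degree $\Delta$, many non-neighbours inside $X_i$, and no outside edges at all.

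What is missing is an idea to handle high-degree vertices that dominate no pairs, and the paper supplies it through the \emph{order} of colouring rather than through repeated colours: it identifies the set $Z_i$ of vertices dominating at least $|M_i|/6$ pairs (shown to have size $\ge\Delta/6$ by double counting), and colours the troublesome high-degree vertices with many neighbours in $Z_i$ (the set $W_i^+$, which contains the $v^*$ above) \emph{before} $Z_i$, so that at that moment they have $\ge|Z_i|/4=\Omega(\Delta)$ still-uncoloured neighbours and hence ample slack without any repeated colours; the remaining high-degree vertices with few neighbours in $Z_i$ form a set of size $O(k)$ by the outside-edge bound and are also coloured early. Your single completion call in Phase~(iii) cannot reproduce this. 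Two further (more minor) problems: the inequality $c-d_G(v)\ge k/100-k/48$ in Phase~(iii) is negative, so your degree threshold for ``low-degree'' vertices must sit below $\Delta-k/48$ (e.g.\ $\Delta-k/24$); and in Phase~(ii) you do not cap $|M_i|$, whereas the slack computation for the contracted pairs needs $|M_i|=O(k)$ (the paper truncates to exactly $k/4$ pairs) --- with $\Theta(\Delta)$ risky vertices matched, a contracted pair can have $\Theta(\Delta)$ neighbours among other contracted pairs while its list has size only about $c-\Delta/2$.
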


We now explain how these two results can be combined to provide a
proof of Theorem~\ref{thm:1}. It should be mentioned that we have made
no significant effort to optimize the various constants appearing
throughout the proof, and have chosen instead to focus on making
the proof as simple as
possible. Lemmas~\ref{lem:sparse} and~\ref{lem:dense} will
be proved at the end of the section.

\bigskip

\noindent \emph{Proof of Theorem~\ref{thm:1}.}
If $G$ contains a clique on more than $\Delta-k$ vertices,
it can be found in $O(1)$ rounds so we may assume in the remainder that
$G$ has clique number at most $\Delta-k$.

We start by using Lemma~\ref{lem:decompo} to compute a $2^{-4}k$-dense
decomposition $S,X_1,X_2,\ldots,X_t$ of $G$ (note that we have
$2^{-4}k\le 2^{-4}\Delta/30\le \Delta/100$, as required). Let $T$ be the vertices
of $S$ with degree at least $\Delta-2^{-5}k$ in $S$. Since each vertex of
$v\in T$ is $2^{-4}k$-sparse, $N(v)$ contains at least $${\Delta-2^{-5}k\choose
2}-{\Delta\choose
2}+2^{-4}k\Delta\ge 2^{-5}k\Delta$$ pairs of
non-adjacent vertices in $S$.

Using Lemma~\ref{lem:sparse} with $\ell=2^{-5}k$, we then obtain a partial coloring of $S$
with at most $\Delta/2\le \Delta-2^{-24}k$ colors in $\textsf{T}_{LLL}(n,\poly \Delta)$ rounds w.h.p.,
such that each uncolored vertex of $T$ has more than $2^{-23}k$ repeated colors
in its neighborhood. Let $U$ be the set of uncolored vertices of $S$,
and for each vertex of $v\in U$, let
$L(v)$ be the set of colors from $1,\ldots,\Delta-2^{-24}k$ that do not appear in the neighborhood of
$v$. We claim that 
\begin{longequation}\label{eq:1}
for each $v\in U$, $|L(v)|-d_U(v)\ge 2^{-24}k$,
\end{longequation}
where
$d_U(v)$ denotes the number of neighbors of $v$ in $U$, or
equivalently the degree of $v$ in $G[U]$. 

\smallskip

To see why (\ref{eq:1}) holds,
consider first the case $v\in U-T$. Observe that in this case $v$ has
degree at most
$\Delta-2^{-5}k$ in $S$, and thus (\ref{eq:1}) follows directly from
Observation~\ref{obs:1} with $c=\Delta-2^{-24}k$, $\ell=0$, and
$d_S(v)\le\Delta-2^{-5}k$ (which implies $c-d_S(v)\ge\Delta-2^{-24}k-\Delta+2^{-5}k\ge 2^{-24}k$).

Assume now that $v\in
U\cap T$. Since each uncolored vertex
of $T$ has more than $2^{-23}d$ repeated colors
in its neighborhood, (\ref{eq:1}) follows directly from
Observation~\ref{obs:1} with $c=\Delta-2^{-24}k$ and $\ell=2^{-23}k$
(which implies $c-\Delta+\ell=\Delta-2^{-24}k-\Delta+2^{-23}k=2^{-24}k$). This concludes the proof of (\ref{eq:1}).

\medskip

It
follows from (\ref{eq:1}) (and the discussion before Observation~\ref{obs:1}) that we can extend the partial coloring of $S$ to
all the vertices of $S$ in $\textsf{T}_{\de+\Omega(k)}(n,\Delta)$ rounds, w.h.p.

\medskip

It remains to extend the coloring of $S$ to the dense components
$X_1,\ldots,X_t$. Using Lemma~\ref{lem:dense}, the coloring of $S$ can
then be extended to $X_1,\ldots,X_t$ in
$O(\textsf{T}_{\de+\Omega(k)}(n,\Delta))$ rounds, w.h.p. It follows that the round complexity is $O(\textsf{T}_{LLL}(n,\poly \,
\Delta)+\textsf{T}_{\de+\Omega(k)}(n,\Delta))$ rounds w.h.p. Using
Theorems~\ref{thm:LLL} and~\ref{thm:lcolp}, this is
the minimum of $O(\log_\Delta n+\log_k \Delta)+2^{O(\sqrt{\log \log n})}$ and $2^{O(\log \Delta+\sqrt{\log \log n})}$
  w.h.p., and in particular it is  $O(\log n/\log \log n)$  w.h.p., for any value of $\Delta$, which concludes the proof of
Theorem~\ref{thm:1}.\hfill $\Box$

\medskip

We now turn to the proof of Lemma~\ref{lem:sparse}, which is a
classical application of the probabilistic method, see Lemma 10
in~\cite{MR14}, or Section 10.3
in~\cite{MR02} (which considered a slightly smaller range of values for the
parameter $d$, namely $d=\Omega(\log^3 \Delta)$ instead of $d=\Omega(\log\Delta)$).

\medskip

\noindent {\it Proof of Lemma~\ref{lem:sparse}.}
Let $C=\tfrac{\Delta}2$. We apply the following simple randomized procedure
in two steps (see Section 3.2 in~\cite{MR14} or Section 10.3 in~\cite{MR02}).
\begin{itemize}
\item[\textbf{1. }] Each vertex $v$ with at least $\ell\Delta$ pairs of non-adjacent
vertices in $N(v)$ chooses a color uniformly at
  random from $\{1,\ldots,C\}$, independently of the other vertices.
\item[\textbf{2. }] If the color chosen by $v$ is also chosen by a
  neighbor of $v$ at Step 1, then $v$ uncolors itself.
\end{itemize}

Note that two adjacent vertices that received the same color at Step 1
will both be uncolored at Step 2 (it is the reason why this procedure is
sometimes called the \emph{wasteful coloring procedure}).

\medskip

The classical analysis of the procedure is as follows. For $v$ with at least $\ell\Delta$ pairs of non-adjacent
vertices in $N(v)$, we define $B_v$ as the event that there are at most
$2^{-18}\ell$ repeated colors in $N(v)$. We will prove that
$\Pr(B_v)\le \Delta^{-2^6}$. Since each event $B_v$ only depends of
the colors of the vertices at distance at most two from $v$, the
maximum degree of the event dependency graph associated to the events $B_v$ is at
most $\Delta^4$ and it follows that we can find a
partial color assignment avoiding all events $B_v$ in
$\textsf{T}_{LLL}(n,\Delta^4)$ rounds w.h.p., as $2^{15}\Delta^{-2^6} (\Delta^4)^{8}\le
\tfrac1{\Delta}<1$ for sufficiently large $\Delta$.

Let $P_v$ be number of pairs of non-adjacent vertices $u,w$ in $N(v)$
such that (1) $u$ and $w$ were assigned the same color and (2) no
other neighbor of $u$, $v$, or $w$ was assigned the same color. Note
that the
number of repeated colors in $N(v)$ after Step 2 is at least
$P_v$. Since $v$ has at least $\ell\Delta$ pairs of non-adjacent vertices
in $N(v)$ and at most $3\Delta$ vertices are neighbors of $u$, $v$, or
$w$, we have $\Exp(P_v)\ge \ell\Delta \cdot \tfrac1{C} \cdot
(1-\tfrac1{C})^{3\Delta}\ge 2\ell e^{-12}\ge 2^{-17}\ell$, using that $C=\Delta/2$ and $1-x\ge \exp(-2x)$ for any
$0\le x\le \tfrac12$.

It follows that if $B_v$ holds, then $|P_v-\Exp(P_v)|\ge 2^{-18}\ell$. We
now prove using Talagrand's Inequality (Theorem~\ref{thm:tal}), that
$P_v$ is highly concentrated, and thus the probability that it differs
from its expectation by at least $2^{-18}\ell$ is small. To this end,
define $Y_v$ as the number of colors assigned to at least one pair
of non-adjacent neighbors of $v$, and let $Z_v$ be the number of
colors assigned to at least one pair
of non-adjacent neighbors of $v$ (call them $u$ and $w$), and also to
at least one distinct neighbor of $u$, $v$, or $w$. We clearly have
$P_v=Y_v-Z_v$, and thus, if $P_v$ differs from its expectation by at
least $2^{-18}\ell$, then $Y_v$ or $Z_v$ differs from its
expectation by at least $2^{-19}\ell$.

\smallskip

Note first that $\Exp(Z_v)\le \Exp(Y_v)\le C \ell\Delta \cdot
\tfrac1{C^2}=2\ell$. Observe also that any change on the color of
a single vertex affects
the values of
$Y_v$ and $Z_v$ by at most 1 (removing the old color can only decrease the
variables, by at most 1, and adding the new color can only increase
the variables, also by at most 1). Moreover, if
$Y_v\ge s$ there is a set of $2s$ color assignments to the vertices of
$N(v)$ that certify this, and if $Z_v\ge s$ there is a set of $3s$
color assignments to the vertices at distance at most two from $v$
that certify this. 
We can thus apply Theorem~\ref{thm:tal} to the variable
$Y_v$ with $c=1$ and $r=2$, and to the variable $Z_v$ with $c=1$ and
$r=3$. We obtain

$$\mathbb{P}\left(\lvert Y_v - \mathbb{E}(Y_v) \rvert > t + 20\sqrt{2\mathbb{E}(Y_v)}+2^7\right) \leq 4\cdot \exp \left(-\frac{t^2}{2^4(\mathbb{E}(Y_v)+t)} \right)$$

Take $t=2^{-19}\ell-20\sqrt{2\mathbb{E}(Y_v)}-2^7$ and note that $t\ge 2^{-19}\ell
-2^6\sqrt{\ell}-2^7\ge 2^{-20}\ell$ for sufficiently large $\ell$ (recall that
$\ell=\Omega(\log \Delta)$ and $\Delta$ is assumed to be sufficiently
large). Note also that $\Exp(Y_v)+t\le 2\ell+2^{-19}\ell \le 4\ell$. As a consequence,

$$\mathbb{P}\left(\lvert Y_v - \mathbb{E}(Y_v) \rvert > 2^{-19}\ell
\right) \leq 4\cdot \exp \left(-\frac{2^{-40}\ell^2}{2^6\ell} \right)\le 4\cdot \exp \left(-2^{-46}\ell \right).$$

For $Z_v$ we obtain similarly: $$\mathbb{P}\left(\lvert Z_v -
  \mathbb{E}(Z_v) \rvert > t + 20\sqrt{3\mathbb{E}(Z_v)}+3\cdot
  2^6\right) \leq 4\cdot \exp \left(-\frac{t^2}{24(\mathbb{E}(Z_v)+t)}
\right).$$

By taking $t=2^{-19}\ell-20\sqrt{3\mathbb{E}(Z_v)}+3\cdot
  2^6$, and noting that for sufficiently large $\Delta$, we have $2^{-20}\ell\le t\le
  2^{-19}\ell$ and $\Exp(Z_v)+t\le 4\ell$, we obtain:

$$\mathbb{P}\left(\lvert Z_v - \mathbb{E}(Z_v) \rvert > 2^{-19}\ell
\right) \leq 4\cdot \exp \left(-\frac{2^{-40}\ell^2}{2^7\ell} \right)\le
4\cdot \exp \left(-2^{-47}\ell \right).$$

Note that since $\ell\ge 2^{54}\log \Delta$, we have $4\cdot \exp
\left(-2^{-47}\ell \right)\le \tfrac12 \Delta^{-2^6}$ for sufficiently
large $\Delta$. It follows that the probability that $P_v$ differs
from its expectation by at least $2^{-18}\ell$ is at most
$\Delta^{-2^6}$, as desired. This concludes the proof of Lemma~\ref{lem:sparse}. \hfill $\Box$

\medskip

We conclude this section with the proof of Lemma~\ref{lem:dense},
which is an extension of the proof of Lemma 16 in~\cite{MR14}
(which only considered the special case $k=\sqrt{\Delta}$). A
significant difference is that in~\cite{MR14}, the coloring is
extended to each dense set sequentially, while here we color all the
dense sets $X_i$ at once.

\medskip

\noindent \emph{Proof of Lemma~\ref{lem:dense}.}
By the definition of a $2^{-4}k$-dense decomposition, recall that 

\begin{enumerate}
\item $X_i$ has between $\Delta - k/2$ and $\Delta+k/4$ vertices.
\item There are at most $k\Delta/2$ edges between $X_i$ and $V-X_i$.
\item a vertex is adjacent to at least $\frac{3\Delta}{4}$ vertices of $X_i$ if and only if it is in $X_i$.
\end{enumerate}
Consider a maximal matching in the complement of $X_i$ (the graph with
vertex-set $X_i$ in which two vertices are adjacent if and only if
they are non-adjacent in $G$). Note that the set $C$ of vertices of $X_i$ not
covered by the matching forms a clique (of size at most $\Delta-k$),
and since $X_i$ has size at least $\Delta-k/2$, the matching has
size at least $\tfrac12 (\Delta-k/2-|C|)\ge \tfrac12
(\Delta-k/2-\Delta+k)\ge k/4$. 

Let $M_i$ be a set of precisely
$k/4$ pairs of distinct vertices $(u_1,v_1),\ldots,(u_{k/4},v_{k/4})$
of $X_i$, that are
pairwise disjoint, and such that for any $1\le j\le k/4$, $u_j$ is
non-adjacent to $v_j$ in $G$. Let $U_i$ be the set of
vertices of $X_i$ not covered by $M_i$, and note that $\Delta-k\le |U_i|
\le \Delta-\tfrac{k}4$. We
say that a vertex $w\in U_i$ dominates a pair $(u_j,v_j)$ of $M_i$ if $w$ is
adjacent to both $u_j$ and $v_j$.
Fix a pair $(u_j,v_j)$ in $M_i$, and observe that by
property (3) above, the
number of vertices of $U_i$ that dominate $(u_j,v_j)$ is at
least $3\Delta/2-4|M_i|-|U_i|\ge 3\Delta/2-k-\Delta +\tfrac{k}4\ge \Delta/2-\tfrac{3k}4\ge
|U_i|/3$ (the final inequality follows from the fact that $k\le
\Delta/30\le 2\Delta/9$). A simple double counting argument then
shows that the number of vertices of $U_i$ that dominate at least
$|M_i|/6$ pairs of $M_i$ is at least $|U_i|/5$. Let
$Z_i$ be the set of such vertices of $U_i$, and note that $|Z_i|\ge |U_i|/5\ge
\Delta/5-k/5\ge \Delta/6$ (since $k\le \Delta/30\le \Delta/6$).

We now divide $U_i-Z_i$ into 3
parts: the set $W^0_i$ of vertices of $U_i-Z_i$ of degree at most $\Delta-k$
in $G$, the set $W^+_i$ of vertices of $U_i-(Z_i\cup W^0_i)$ with at least
$|Z_i|/4$ neighbors in $Z_i$, and the set $W_i^-$ of vertices of $U_i-(Z_i\cup
W^0_i)$ with less than
$|Z_i|/4$ neighbors in $Z_i$. Note that the vertices of $W^-_i$ have degree
at least $\Delta-k$ in $G$ (since they are not in $W_i^0$) and thus they
have at least $\Delta-k-(|X_i|-3|Z_i|/4)\ge
\Delta-k-\Delta-k/4+\Delta/8\ge \Delta/8-5k/4\ge \Delta/12$
neighbors outside of $X_i$ (the final inequality follows from the
fact that $k\le \Delta/30$). Since there are at most $k\Delta/2$ edges
between $X_i$ and $v-X_i$, we have $|W^-_i|\cdot\Delta/12 \le k\Delta/2$ and
thus $|W^-_i|\le 6k$.

We are now ready to extend the coloring of $S$ to the sets $X_i$. We
proceed in the following order.

\begin{enumerate}
\item We start by coloring the vertices covered by the $M_i$'s. Consider the graph $H_1$ obtained from $G$ by identifying the vertex $u_j$
  with the vertex $v_j$, for each pair $(u_j,v_j)$ of each set
  $M_i$. The coloring of $S$ in $G$ corresponds to a coloring of $S$
  in $H_1$, and we want to extend this coloring of $S$ in $H_1$ to
  the newly created vertices (in $G$, this will correspond to an
  extension of the coloring of $S$ to all the vertices covered by the
  $M_i$'s, such that in any pair $(u_j,v_j)$ of some $M_i$, the two
  vertices $u_j$ and $v_j$ are
  assigned the same color). 

Note that each newly created vertex $x$
  in some $M_i$ has at
  most $\frac{\Delta}{4}+\frac{\Delta}{4}=\Delta/2$ neighbors outside
  $X_i$ and at most $|M_i|\le k/4$ neighbors among the newly created
  vertices of $M_i$, thus $x$ has degree at most 
  $\Delta/2+k/4$ in $H_1$. If $L(x)$ denotes the list of colors
  available for $x$ in $H_1$ (i.e.\ the colors that do not appear among
  the neighbors of $x$ in $S$), then it follows from
  Observation~\ref{obs:1} with $c\ge \Delta-k/48$, $\ell=0$, and
  $d_{H_1}(x)\le\Delta/2+k/4$ that $|L(x)|$ exceeds the number of neighbors
  of $x$ in $H_1-S$ by at least $c-d_{H_1}(x)\ge \Delta-k/48-\Delta/2-k/4\ge
  \Delta/48$. We can thus extend the coloring of $S$ to the
  newly created vertices of $H_1$ in
  $\textsf{T}_{\de+\Omega(\Delta)}(n,\Delta)$ rounds, w.h.p. In $G$, this corresponds to a coloring of the
vertices covered by the $M_i$'s extending the coloring of $S$, such that for any pair $(u,v)$ in any $M_i$, $u$ and $v$ have the same
color.

\item We then color $W^-=\bigcup_i W_i^-$. To do this, observe that since each set $W_i^-$ has
  size at most $6k$, and each corresponding set $M_i$ has size $k/4$,
  it follows that each vertex $v\in W_i^-$ has at most
  $\Delta/4+6k+2\cdot k/4$ neighbors that are either in $W^-$ or
  already colored. Combining this with Observation~\ref{obs:1} (with $c\ge\Delta-k/48$ and
  $\ell=0$),
  we can then extend the current coloring to $W^-$ in
  $\textsf{T}_{\de+\Omega(\Delta)}(n,\Delta)$ rounds w.h.p.

\item We then color $W^+=\bigcup_i W_i^+$. These vertices have at least $|Z_i|/4\ge
  \Delta/24$ neighbors in the corresponding set $Z_i$, which are all
  uncolored at this point (they will be colored at the next step),
  thus each vertex of $W^+$ has at most $23\Delta/24$ neighbors that
  are either in $W^+$ or already colored. Combining this with
  Observation~\ref{obs:1} (with $c\ge \Delta-k/48$ and
  $\ell=0$), we can then extend the current coloring to $W^+$ in
  $\textsf{T}_{\de+\Omega(\Delta)}(n,\Delta)$ rounds, w.h.p.
\item We now color $Z=\bigcup_i Z_i$. Since each vertex of some $Z_i$ is adjacent to both
  members of at least $\frac{|M_i|}{6}$ pairs of $M_i$, it has at least
  $\frac{|M_i|}{6}=k/24$ repeated colors in its
  neighborhood. Combining this with Observation~\ref{obs:1} (with $c\ge\Delta-k/48$ and
  $\ell=k/24$, and thus $c+\ell-\Delta=k/48$),
  we can then extend the current coloring to $W^-$in
  $\textsf{T}_{\de+\Omega(k)}(n,\Delta) $ rounds, w.h.p.
\item We now color $W^0=\bigcup_i W^0_i$. Each vertex in this set has degree
  at most $\Delta-k$ in $G$ and can thus using Observation~\ref{obs:1},
  we can then extend the current coloring to $W^+$  in
  $\textsf{T}_{\de+\Omega(k)}(n,\Delta)$ rounds, w.h.p.
\end{enumerate}

This concludes the proof of Lemma~\ref{lem:dense}.
\hfill $\Box$

\section{Graphs with chromatic number close to the maximum degree}\label{sec:main}


In this section, we prove the main result of this paper.

\smallskip

We start with the (fairly simple) proof of Theorem~\ref{thm:sharp}, and
then prove Theorem~\ref{thm:col}, or rather explain how it can be
deduced from appropriate parts of the proof of Theorem~\ref{thm:mr1}
in~\cite{MR14}. It should be noted that our assumption that $c\ge
\Delta-k_\Delta+1$ makes the proof of Theorem~\ref{thm:col}
significantly easier than the proof of Theorem~\ref{thm:mr1}
in~\cite{MR14}, where the main difficulty comes from the case $c=\Delta-k_\Delta$.

\subsection{Reducers}\label{sec:reducer}

A \emph{stable set}, or \emph{independent set}, is a set of pairwise
non-adjacent vertices.
A \emph{$c$-reducer} in a graph $G$ is a subset $D$ of vertices consisting of a clique $C$ with $c - 1$
vertices and a disjoint stable set $S$
such that every vertex of $C$ is adjacent to all of $S$ but none of
$V(G)-D$ (see Figure~\ref{fig:ope}, right). Given a graph $G$ with a $c$-reducer $D=(C,S)$, the graph
$H$ obtained from $G$ by removing $C$ and identifying all the vertices
of $S$ into a single vertex is called the \emph{reduction} of $G$ with
respect to $D$ (see Figure~\ref{fig:ope}, left). Note that
$G$ is $c$-colorable if and only if $H$ is $c$-colorable, and thus
$c$-reductions preserve $c$-colorability and non-$c$-colorability.

\medskip

\noindent {\it Proof of Theorem~\ref{thm:sharp}.}
Let $\Delta$ be an integer, and assume that either 

\begin{itemize}
\item $c\le
\Delta-k_\Delta-1$, or 
\item  $c=
\Delta-k_\Delta$ and $\Delta=(k_\Delta+1)(k_\Delta+2)$.
\end{itemize}

For $i\ge 1$, we define a graph $G_i$ of maximum degree $\Delta$ and a
subset $C_i$ of $G_i$ inductively as follows. $G_1$ is the
complete graph on $c+1$ vertices, and $C_1$ is the set of vertices of $G_1$. For any $i \ge 2$,  $G_{i}$ is obtained from $G_{i-1}$ by
removing an arbitrary vertex $v_{i-1}$ of $C_{i-1}$, adding a stable set
$S_i$ of size $\Delta-c+2$ and a $(c-1)$-clique $C_i$ such that 
(1) each neighbor of $v_{i-1}$ in $G_{i-1}$ is adjacent to exactly one vertex
of $S_i$, and (2)
each vertex of $S_i$ is adjacent to all the vertices of $C_i$. The
construction of $G_i$ from $G_{i-1}$ is depicted in
Figure~\ref{fig:ope}.

\begin{figure}[htbp]
\begin{center}
\includegraphics[width=5cm]{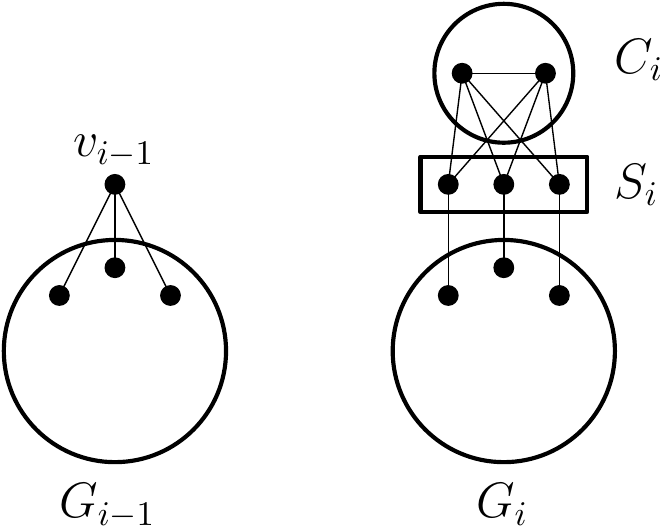}
\caption{The construction of the graph $G_i$ from $G_{i-1}$.}
\label{fig:ope}
\end{center}
\end{figure}

In order to make sure that the maximum degree of $G_i$ is at most
$\Delta$, while performing (1) we split as evenly as possible the degree of $v_{i-1}$ between
the vertices of $S_i$ (each edge between $v_{i-1}$ and some neighbor $u$ in
$G_{i-1}$ becomes an edge joining $u$ and some vertex of $S_i$ in $G_i$,
and we want the degrees of the vertices of $S$ to be as balanced as possible). Since
$|S_i|= \Delta-c+2$, each vertex of $C_i$ has degree
$\Delta$ in $G_i$. 
Each vertex of $S_i$
must also have
degree at most $\Delta$ so it can have up to $\Delta-c+1$ neighbors in
$G_{i-1}$. Since $v_{i-1}$ has degree at most $\Delta$, and
$(\Delta-c+2)(\Delta-c+1)\ge  \Delta$, the edges incident to $v_{i-1}$ in
$G_{i-1}$ can be split among the vertices of $S_i$ in such way that each vertex of $S_i$
has degree at most $\Delta$ in $G_i$.

\begin{figure}[htbp]
\begin{center}
\includegraphics[width=10cm]{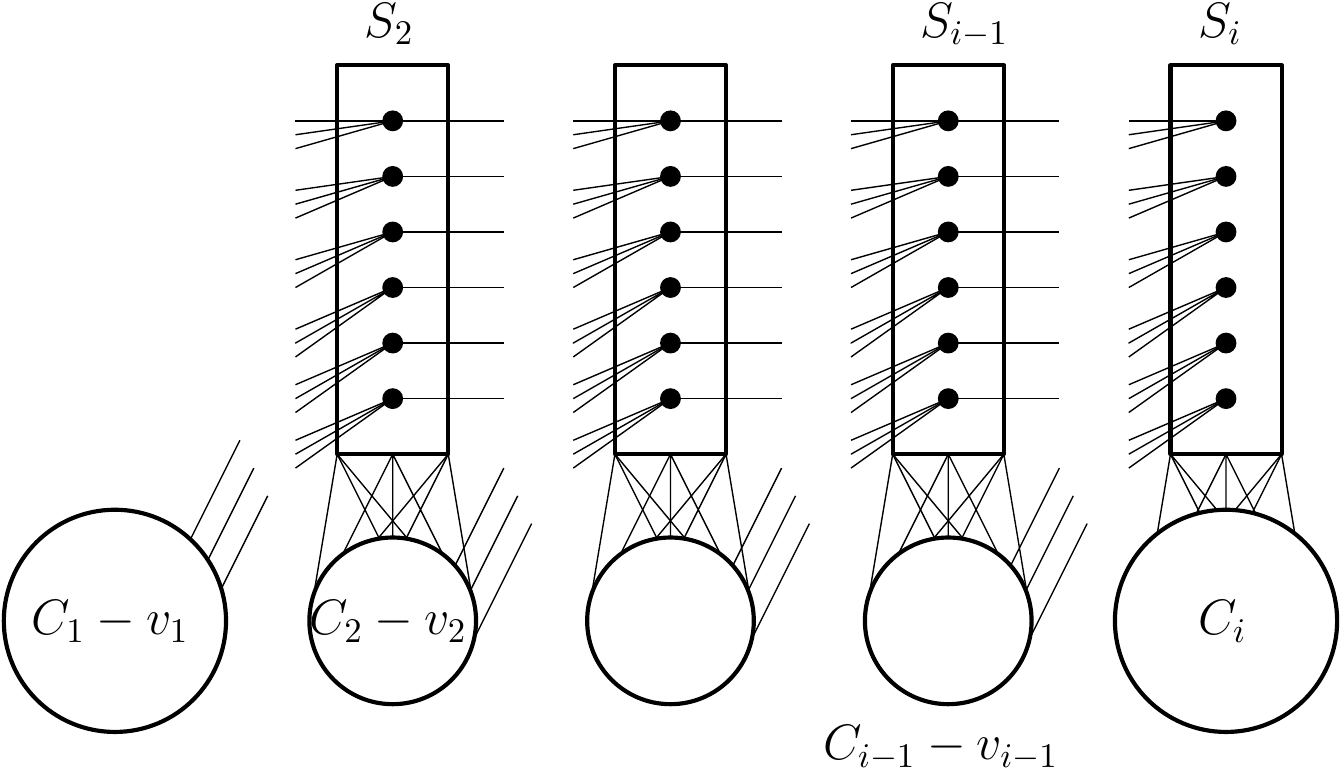}
\caption{The graph $G_i$. Cliques are represented by circles and
  stable sets by rectangles.}
\label{fig:band}
\end{center}
\end{figure}

We now make a couple of remarks on $G_i$. It can be observed that
$G_{i-1}$ is the reduction of $G_i$ with respect to some $c$-reducer,
and since $G_1$ is a clique on $c+1$ vertices and reductions preserve
$c$-non-colorability, $G_i$ is not
$c$-colorable. It is also easy to see that any proper subgraph of
$G_i$ has
chromatic number at most $c$ (see Observation 3 in~\cite{MR14}).
Note that $G_i$ consists of $i$ layers, each being the union of a
clique of size at most $c\le \Delta$ and a
stable set of size at most $\Delta-c+2\le \Delta$ (see Figure~\ref{fig:band}),
and thus $G_i$ has diameter at least $\tfrac{n}{2\Delta}$, where $n$
denotes the number of vertices of $G_i$. Let $G$ be
the graph obtained from $G_i$ by deleting a single edge between a
vertex of layer $i/2$ (i.e.\ a vertex that was added at step $i/2$) and
a vertex of layer $i/2+1$. As a proper subgraph of $G_i$, $G$ has
maximum degree at most $\Delta$ and 
chromatic number at most $c$, and it can be checked that any ball of
radius less than $\tfrac{n}{8\Delta}$ in $G_i$ is isomorphic to a ball
of the same radius in $G$ (by performing each step of the
construction in precisely the same way we can make sure that all the graphs
induced by any fixed number of consecutive layers except the first and
last ones are isomorphic).
Since $G_i$ is not $c$-colorable, it follows from a classical observation\footnote{This observation is
not explicitly stated in~\cite{Lin92}, but is the essence of the
proof of Theorem 3.1 in that paper. Namely, if two graphs $G,H$ are
such that $|V(H)|\le |V(G)|$ and any ball of radius $t+1$ in $H$ is isomorphic to some ball
of radius $t+1$ in $G$, then the $t$-neighborhood
graph $N_t(H)$ of $H$ is a subgraph of the $t$-neighborhood
graph $N_t(G)$ of
$G$. Since $H$ is a subgraph of $N_t(H)$, it is also a subgraph of
$N_t(G)$. It follows from ~\cite[Proposition 2.3(1)]{Lin92} that a graph $G$ can be
$c$-colored in $t$ rounds if and only if its $t$-neighborhood graph
$N_t(G)$ is $c$-colorable. This
implies that $G$ cannot be colored with less than $\chi(H)$ colors in
$t$ rounds in the \textsf{LOCAL} model.} 
of Linial~\cite{Lin92}, that $G$ cannot be colored optimally
(i.e.\ with $c$ colors)  in less than $\tfrac{n}{8\Delta}$
rounds.
This concludes the proof of Theorem~\ref{thm:sharp}.
\hfill $\Box$

\subsection{Overview of the proof of Theorem~\ref{thm:col}}

We start by considering the first part of the statement of Theorem~\ref{thm:col}: if
$G$ is not $c$-colorable, then some vertex is supposed to output  a \emph{certificate} that $G$
is not $c$-colorable. In order to do so, we will use the following result of Molloy and Reed (Theorem 5
in~\cite{MR14}).

\begin{thm}\label{thm:mrlocal}
For sufficiently large $\Delta$, and for $c \ge \Delta -k_\Delta+1$,
if $G$ has maximum degree at most $\Delta$, and $\chi(G) > c $,
then there is some vertex $v$ in $G$ such that the subgraph induced by
$\{v\} \cup N(v)$ is not $c$-colorable.
\end{thm}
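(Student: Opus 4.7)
The plan is to prove the contrapositive: if every closed neighborhood $G[\{v\}\cup N(v)]$ is $c$-colorable, then $\chi(G)\le c$. Suppose for contradiction that $\chi(G)>c$ and let $H$ be a $(c+1)$-vertex-critical subgraph of $G$, so $\delta(H)\ge c$ and every proper subgraph of $H$ is $c$-colorable. It suffices to show that $H$ is a clique on $c+1$ vertices, because then (since $c+1\le\Delta+1$) $H$ embeds in $G[\{v\}\cup N(v)]$ for any $v\in V(H)$, contradicting the hypothesis.

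Assume for contradiction that $H$ is not a clique. Apply the dense decomposition of Lemma~\ref{lem:decompo} to $H$ with $d=k_\Delta$, obtaining a partition $S,X_1,\dots,X_t$. Because $\deg_H(v)\ge c=\Delta-k_\Delta+1$, every vertex of $H$ is missing at most $k_\Delta-1$ neighbors among the $\Delta$ possible ones in $G$; a short count (a vertex has at most $\binom{k_\Delta-1}{2}+(k_\Delta-1)\Delta$ non-edges in its would-be neighborhood in $K_\Delta$) shows that $S$ is empty, each dense component $X_i$ has size in $[\Delta-8k_\Delta,\Delta+4k_\Delta]$, and the complement of each $X_i$ has maximum degree $O(k_\Delta)$. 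Since $H$ is not a clique, either two non-adjacent vertices lie in the same $X_i$, or $t\ge 2$.

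In either case I would extract a $c$-reducer $D=C\cup Z$ as in Section~\ref{sec:reducer}: a greedy argument on the low-degree complement inside one dense component $X_i$ produces a clique $C\subseteq X_i$ of size $c-1$, and the common non-neighbors of $C$ (either in $X_i\setminus C$ or in some other $X_j$) form a stable set $Z$ such that every vertex of $C$ is joined to all of $Z$ and to nothing else outside $D$. The $c$-reduction of $H$ with respect to $D$ yields a strictly smaller non-$c$-colorable graph $H'$, and a Gallai-type identification argument, applied to any $(c+1)$-vertex-critical subgraph of $H'$ of maximum degree $\le\Delta$, then contradicts the minimality of $H$. The main obstacle is making sure the stable set $Z$ has no stray edges to $H\setminus D$ and that the identified vertex in $H'$ has degree at most $\Delta$; this is exactly where the slack $c\ge\Delta-k_\Delta+1$ (one more color than the tight threshold of Theorem~\ref{thm:mr1}) keeps the analysis clean, because one needs only a $(c-1)=(\Delta-k_\Delta)$-clique inside $X_i$, leaving enough room among the non-neighbors of $C$ to form $Z$ without the intricate case analysis Molloy--Reed must perform for the critical value $c=\Delta-k_\Delta$.
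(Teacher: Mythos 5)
The paper does not actually prove this statement: it is quoted verbatim as Theorem~5 of Molloy and Reed~\cite{MR14}, whose proof occupies a substantial part of that paper. So your attempt can only be judged on its own terms, and as it stands it has genuine gaps.

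The central one is the reduction to ``$H$ is a clique on $c+1$ vertices.'' That claim is false, and no amount of reducer surgery will rescue it: the join of $C_5$ with $K_{c-2}$ is $(c+1)$-vertex-critical, is not a clique, and has maximum degree $c+2\le\Delta$ whenever $c\le\Delta-2$, which is compatible with $c\ge\Delta-k_\Delta+1$ for large $\Delta$. The theorem survives because this graph has a universal vertex and hence sits inside a closed neighborhood; the correct (and much weaker) structural target is therefore that every $(c+1)$-critical graph of maximum degree at most $\Delta$ contains a vertex adjacent to all others, not that it is complete. A second gap is the assertion that the sparse part $S$ of the decomposition is empty ``by a short count.'' The hypothesis $\delta(H)\ge c$ bounds how many neighbors a vertex is \emph{missing}, but says nothing about the number of non-adjacent pairs \emph{inside} $N_H(v)$, which is what $d$-density measures; ruling out sparse vertices in a critical graph is precisely where Molloy and Reed need the wasteful colouring procedure plus the Lov\'asz Local Lemma (the analogue of Lemma~\ref{lem:sparse} here), not a counting argument. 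Finally, the concluding ``contradiction with the minimality of $H$'' is not well formed: the $c$-reduction $H'$ is obtained by deleting a clique and identifying a stable set, so it is not a subgraph of $G$, and a critical subgraph of $H'$ gives no contradiction with the choice of $H$ without substantial further work (this identification step is exactly why, at the threshold $c=\Delta-k_\Delta$, the obstruction becomes a chain of reducers rather than a single neighborhood, as the paper's Theorem~\ref{thm:sharp} exploits). In short, the skeleton points at the right ingredients (criticality, the dense decomposition, reducers), but each load-bearing step is either false as stated or requires the full Molloy--Reed machinery that the proposal treats as routine.
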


\medskip

In the \textsf{LOCAL} model of computation, testing the
$c$-colorability of all closed neighborhoods (i.e.\ all the balls of radius 1) in $G$ can be done in a
constant number of rounds, and any vertex finding a non $c$-colorable
subgraph in its closed neighborhood can simply output this subgraph as
a certificate of non $c$-colorability of $G$. It might be worth
pointing that we heavily use the unbounded computational power of the
nodes (and the unbounded bandwidth of the edges) in the \textsf{LOCAL} model here when $\Delta \gg \log n$. However, when $\Delta=O(\log n)$, all the closed
neighborhoods have logarithmic size, so testing their $c$-colorability
takes polynomial time (in $n$) in any classical model of
computation. Moreover, when $\Delta=O(1)$ the same task can be
performed in constant time in any classical model of
computation.

\medskip

We can now assume that $G$ is $c$-colorable, and the goal is to
find a $c$-coloring of $G$ in $O(\textsf{T}_{\de+1}(n,\Delta) )+O((\log\Delta)^{13/12})\cdot \textsf{T}_{LLL}(n,\poly \,
\Delta)$
rounds w.h.p. The high-level description of the proof is as follows: we
set $d=10^6\sqrt{\Delta}$ and
start by computing a $d$-dense decomposition $S,X_1,\ldots,X_t$ of
$G$. We then delete all the sets $X_i$ that are $c$-reducers or such that $\overline{G[X_i]}$
has a matching of size at least $100\sqrt{\Delta}$. These sets will be
colored at the very end, once the rest of the graph will be
colored, using a proof very similar to that of Lemma~\ref{lem:dense},
in $O(\textsf{T}_{\de+1}(n,\Delta))$ additional rounds (Lemmas~\ref{lem:8} and~\ref{lem:16}). So we can assume
that no set $X_i$ is a $c$-reducer or has a large antimatching. Using this assumption, we
then find a specific $c$-coloring in each set $X_i$, independently of the
other sets $X_j$, with desirable properties (Lemma~\ref{lem:25}). Using this coloring
of each set $X_i$, we will construct a new graph $F$ from $G$ by
contracting the color classes from the dense sets into single vertices, and adding suitable
edges at strategic places in the graph (Lemma~\ref{lem:12}). All these contractions
and edge additions can be easily simulated in $G$, since they involve
pairs of vertices at distance at most 4 apart. The final part will consist in coloring $F$ with $c$ colors, and from this
coloring it will be easy to deduce a $c$-coloring of $G$. Note that
because of the edge additions and contraction, the maximum degree of
$F$ is not bounded by $\Delta$ anymore, but it remains
$O(\Delta)$. The coloring of $F$ is then obtained by a very intricate
semi-random process. Fortunately, for us it boils down to repeated applications of the
Lov\'asz Local Lemma (more precisely, $O((\log \Delta)^{13/12})$ successive
applications), and we just need to make sure that the distributed
Lov\'asz Local Lemma can be safely substituted to its classical
version everywhere in the proof
(Lemma~\ref{lem:13}). With this high-level view in mind, we now proceed with
the proof.

\subsection{Proof of Theorem~\ref{thm:col}} 

Let
$d=10^6\sqrt{\Delta}$. We first compute a $d$-dense decomposition $S,X_1,\ldots,X_t$ of
$G$ in $O(1)$ rounds using Lemma~\ref{lem:decompo}.

\smallskip

A $c$-reducer $D = (C,S')$ is said to be \emph{deletable}
if there are fewer than $c$ vertices in $G-D$ with a neighbor in $S$. Observe that if $D=(C,S')$ is a deletable
$c$-reducer in $G$, then any $c$-coloring of $G-D$ can be
extended to $D$ (since there is a color which does not appear in the
neighborhood of $S'$ in $G-D$). It was observed in~\cite[Observation 8]{MR14} that when
$c\ge \Delta-k_\Delta+1$, any $c$-reducer is deletable. It has the
following consequence.

\begin{lem}\label{lem:8}
  Let $X^r$ be the union of all the $c$-reducers $X_i$. Then there is a
distributed randomized algorithm (running in $G$) that extends any $c$-coloring of
$G-X^r$ to $G$ in $\textsf{T}_{\de+1}(n,\Delta)$ rounds, w.h.p.
\end{lem}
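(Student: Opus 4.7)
\emph{Proof plan.} The plan is to extend the coloring of $G-X^r$ to $X^r$ in two stages, using the structure of each $c$-reducer $X_i = C_i \cup S'_i$ (a clique $C_i$ of size $c-1$ fully joined to a stable set $S'_i$, with no edges from $C_i$ to $V(G)-X_i$). Since vertices of $C_i$ have no external neighbors, once I have assigned a single color $\alpha_i$ to the whole stable set $S'_i$, the clique $C_i$ can be properly colored in $O(1)$ rounds with the $c-1$ colors $\{1,\ldots,c\}\setminus\{\alpha_i\}$, distributed to its $c-1$ vertices in increasing id order. So the bulk of the work is to choose the colors $\alpha_i$.

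To do that I would build an auxiliary quotient graph $Q$ whose vertices are the super-vertices $\{S'_i\}$, with an edge between $S'_i$ and $S'_j$ whenever some edge of $G$ joins them, and equip each super-vertex with the list $L(S'_i)\subseteq\{1,\ldots,c\}$ of colors that do not appear on any colored (in $G-X^r$) neighbor of a vertex of $S'_i$. Since $S'_i$ is stable, any proper $L$-list coloring of $Q$ supplies valid colors $\alpha_i$. The key claim is that $Q$ satisfies the $(\de+1)$-condition: writing $a_i$ (resp.\ $b_i$) for the number of colored (resp.\ uncolored) external neighbors of $S'_i$---the uncolored ones all lying in some $S'_j$ with $j\ne i$---deletability gives $a_i+b_i\le c-1$. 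Hence $|L(S'_i)|\ge c-a_i$ while $d_Q(S'_i)\le b_i$, because each super-vertex neighbor of $S'_i$ in $Q$ contributes at least one vertex to $b_i$. Together these yield $|L(S'_i)|\ge d_Q(S'_i)+1$.

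The last step is to simulate $Q$ inside $G$. Identifying the $c$-reducers and their $C_i/S'_i$ decomposition is local to each dense component (diameter at most $2$, by Lemma~\ref{lem:decompo}) and takes $O(1)$ rounds. Designating the smallest-id vertex of each $S'_i$ as the leader of the super-vertex and relaying intra-$S'_i$ traffic through any vertex of $C_i$ (which is fully joined to $S'_i$), every edge of $Q$ corresponds to a $G$-path of length at most $5$, so one round on $Q$ simulates in $O(1)$ rounds of $G$. Applying Theorem~\ref{thm:lcol1} to $Q$ (which has at most $n$ vertices and maximum degree at most $c-1\le\Delta$) then yields the claimed $O(\textsf{T}_{\de+1}(n,\Delta))$-round bound w.h.p. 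The only non-routine ingredient is the $(\de+1)$-verification above, which reduces immediately to deletability; I do not anticipate real obstacles beyond the bookkeeping required to set up and simulate $Q$.
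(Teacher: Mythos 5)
Your proposal is correct and is essentially the paper's own proof: the paper also contracts each stable set $S'_i$ into a super-vertex, observes that deletability bounds its degree in the reduced graph by $c-1$ so the $(\de+1)$-list-coloring condition holds, solves that instance via Theorem~\ref{thm:lcol1}, and then colors each clique $C_i$ greedily in $O(1)$ rounds. The only cosmetic difference is that you restrict attention to the quotient graph on the super-vertices while the paper works in the full reduced graph $R$; the list-coloring instance is the same.
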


\begin{proof}
For each $c$-reducer $X_i=(C_i,S_i)$, perform the reduction of $G$
with respect to $X_i$ (i.e.\ delete the clique $C_i$, and identify all
the vertices of $S_i$ into a single vertex $v_i$). Let $R$ be the
resulting graph, and let $N$ be the set of newly created vertices in
$R$. Note that the $c$-coloring of $G-X^r$ corresponds to a
$c$-coloring of $R-N$, and our goal is simply to extend this coloring
to $R$ (once this is done, we only have to assign the color of $v_i$
to all the vertices of the stable set $S_i$ in $G$, and to color $C_i$
with the $c-1$ colors distinct from that of $v_i$, which can
clearly be done in $O(1)$ rounds). Since each $X_i$ we consider
here is deletable, each vertex $v_i\in N$ has degree at most $c-1$ in
$R$. It follows from Observation~\ref{obs:1} (similarly as in Section~\ref{sec:thm1}) that the $c$-coloring of $R-N$ can be extended
to $N$ by a distributed randomized algorithm running in $\textsf{T}_{\de+1}(n,\Delta)$ rounds w.h.p., as desired.
\end{proof}

We say that a dense set $X_i$ is \emph{hollow} if $\overline{G[X_i]}$ (the complement of $G[X_i]$) contains a
matching of size at least $100\sqrt{\Delta}$.
We now rephrase Lemma 16 from~\cite{MR14} for our
convenience (the proof of Lemma~\ref{lem:16} follows the same lines as
that of Lemma~\ref{lem:dense}).

\begin{lem}\label{lem:16}
Let $X^h$ be the union of the all the hollow sets $X_i$. Then any $c$-coloring of
$G-X^h$ can be extended to $G$ by a distributed randomized
algorithm running w.h.p.\ in $\textsf{T}_{\de+\Omega(\sqrt{\Delta})}(n,\Delta)$
rounds.
\end{lem}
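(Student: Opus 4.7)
The plan is to mimic the proof of Lemma~\ref{lem:dense} with the parameters of Section~\ref{sec:main}: the dense-decomposition parameter is $d=10^{6}\sqrt{\Delta}$, the number of colors satisfies $c\ge\Delta-k_\Delta+1=\Delta-\Theta(\sqrt{\Delta})$, and for each hollow set $X_i$ we have a matching $M_i$ in $\overline{G[X_i]}$ of size at least $100\sqrt{\Delta}$. I would fix such an $M_i$ of size exactly $100\sqrt{\Delta}$ per hollow $X_i$, set $U_i=X_i\setminus V(M_i)$, and run the decomposition of $U_i$ used in the proof of Lemma~\ref{lem:dense}: let $Z_i\subseteq U_i$ be the set of vertices dominating at least $|M_i|/6\ge 16\sqrt{\Delta}$ pairs of $M_i$ (so $|Z_i|\ge|U_i|/5\ge\Delta/6$ by the same double-counting argument), and split $U_i\setminus Z_i$ into $W_i^0$ (vertices of $G$-degree at most $\Delta-2\sqrt{\Delta}$), $W_i^+$ (remaining vertices with at least $|Z_i|/4$ neighbors in $Z_i$), and $W_i^-$ (the rest). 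The $8d\Delta$-bound on edges leaving $X_i$ yields $|W_i^-|=O(\sqrt{\Delta})$, exactly as in Lemma~\ref{lem:dense}.

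Next, I would extend the given coloring of $G-X^h$ to $X^h$ in the same five stages as in the proof of Lemma~\ref{lem:dense}: (i) color $\bigcup_iV(M_i)$ by identifying each pair in an auxiliary graph $H_1$; (ii) color $W^-=\bigcup_iW_i^-$; (iii) color $W^+=\bigcup_iW_i^+$; (iv) color $Z=\bigcup_iZ_i$; (v) color $W^0=\bigcup_iW_i^0$. At each stage, the goal is to show that the induced problem is a $(\de+\Omega(\sqrt{\Delta}))$-list coloring instance, which by Theorem~\ref{thm:lcolp} can then be solved in $\textsf{T}_{\de+\Omega(\sqrt{\Delta})}(n,\Delta)$ rounds w.h.p.

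The slack estimates at stages (ii), (iii), and (v) copy the corresponding arguments from Lemma~\ref{lem:dense} with the constants adjusted to the new values of $d$ and $|M_i|$, giving slack $\Omega(\Delta)$ at (ii) and (iii) and slack $\sqrt{\Delta}+1$ at (v). Stage (iv) applies Observation~\ref{obs:1} with $\ell=|M_i|/6$ repeated colors per $Z_i$-vertex---these repetitions arise precisely from the pair-coloring produced at stage~(i)---yielding slack $\ge c+|M_i|/6-\Delta=\Omega(\sqrt{\Delta})$.

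The main obstacle will be verifying the slack at stage~(i): a newly created pair vertex $x_j$ of $H_1$ has potentially large total degree in $H_1$ (in particular many uncolored $U_{i'}$-neighbors), so Observation~\ref{obs:1} cannot be invoked head-on. The resolution is that when running the list coloring subroutine on the subgraph of $H_1$ induced by the identified vertices, the relevant quantity is the sum of (colored neighbors of $x_j$ in $G-X^h$) plus (other identified vertices adjacent to $x_j$ in $H_1$). A vertex in the first category must lie outside $X^h$ and hence outside $X_i$; and an identified vertex $x_{j'}$ from some $M_{i'}\neq M_i$ in the second category forces $u_j$ or $v_j$ to have a neighbor in $X_{i'}$, again outside $X_i$. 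Since $u_j,v_j\in X_i$ together have at most $\Delta/2$ neighbors outside $X_i$, the sum of these two quantities is at most $\Delta/2$; adding the at most $|M_i|-1$ identified neighbors inside $M_i$ itself gives slack $\ge c-\Delta/2-|M_i|=\Omega(\Delta)$ at stage~(i). Combining the five stages then extends the coloring to $G$ in $\textsf{T}_{\de+\Omega(\sqrt{\Delta})}(n,\Delta)$ rounds w.h.p., as claimed.
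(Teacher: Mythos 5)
Your proposal is correct and is essentially the paper's intended argument: the paper gives no separate proof of this lemma, stating only that it "follows the same lines" as Lemma~\ref{lem:dense}, and you carry out exactly that adaptation with $|M_i|=100\sqrt{\Delta}$, $d=10^6\sqrt{\Delta}$, and $c\ge\Delta-k_\Delta+1$, with all slack estimates checking out. Your careful accounting at stage (i) --- counting only colored neighbors plus identified neighbors rather than the full $H_1$-degree of $x_j$ --- is in fact the correct reading of a step that the proof of Lemma~\ref{lem:dense} states somewhat loosely.
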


We temporarily delete from $G$ all the $X_i$ that are $c$-reducers or
hollow. These sets of vertices will be
colored at the very end using Lemmas~\ref{lem:8} and~\ref{lem:16}. Let $H$ be the graph obtained from $G$
by removing the dense
components from Lemmas~\ref{lem:8} and~\ref{lem:16}. Note that the restriction of the
decomposition $S,X_1,\ldots,X_t$ to $H$ is still a $d$-dense
decomposition of $H$, and for convenience we keep denoting it in this
way (even if some sets $X_i$ have disappeared). It follows
from our construction that no dense set $X_i$ in $H$
is a $c$-reducer or is 
such that $\overline{H[X_i]}$ contains a
matching of size at least $100\sqrt{\Delta}$. 

\medskip

Given a subset $Y$ of vertices from some dense component $X_i$, an \emph{external
  neighbor} of $Y$ is a vertex outside of $X_i$ with a neighbor in
$Y$. Recall that a coloring of a graph $G$ partitions the vertex-set
of $G$ into
stable sets, which are called the \emph{color classes} associated to
the coloring. Given a $c$-coloring of $X_i$, we define $C_i$ as the set of
vertices of $X_i$ whose color class is a singleton. We say that a
$c$-coloring of $X_i$ is \emph{nice} if:

\begin{itemize}
\item[(1)] $C_i$ is a clique of size at least $\Delta-2\cdot
  10^6\sqrt{\Delta}$,
\item[(2)] each vertex from any color class of size at least 3 is
  adjacent to all the vertices of $C_i$, and
\item[(3)] if $\{x,y\}$ is a color class of size 2, then either there
  is $z\in C_i$ such that $x,y$ are both adjacent to all the vertices
  of $C_i-\{z\}$, or one of $x,y$ is adjacent to all the vertices of
  $C_i$ and the other is adjacent to all but at most $\tfrac{\Delta}4
  +10^7 \sqrt{\Delta}$ vertices of $C_i$.
\end{itemize}

Note that the unique $c$-coloring of a $c$-reducer is nice.
Lemma~\ref{lem:16} now allows us to
use the following result of~\cite{MR14}. The proof heavily uses the
crucial property
that after the removal of the hollow sets, no dense set $X_i$ contains a large antimatching.

\begin{lem}[Lemmas 19, 20, 21, and 25 in~\cite{MR14}]\label{lem:25}
Each dense set $X_i$ of $H$ has a nice $c$-coloring such that:
\begin{itemize}
\item[(a)] If a color class is not the unique largest colour class in $X_i$, then it has at most $\tfrac{\Delta}2+ 10\sqrt{\Delta}$ external neighbors.
\item[(b)] Every color class of $X_i$ has at most
  $c-\sqrt{\Delta}+3$ external neighbors.
\item[(c)] If there is a colour class of $X_i$ with more than
  $c-10^8\sqrt{\Delta}$ external neighbor, then $|C_i| \ge c - 2 \cdot 10^8$ and each vertex of $C_i$
 has at most $3 \cdot  10^8$ external neighbors.
\item[(d)] If there is a colour class of $X_i$ with more than $c-2\sqrt{\Delta}+3$ external neighbours then $|C_i| = c-1$ and each vertex of $C_i$ has at
 most 5 external neighbors.
\item[(e)] If there is a colour class of $X_i$ with more
than $c - 2\Delta^{3/4} $ external neighbors then $|C_i| \ge c - 5\Delta^{1/4}$ and each
vertex of $C_i$ has at most $8\Delta^{1/4}$ external neighbors.
\end{itemize}
\end{lem}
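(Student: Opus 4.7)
The plan is to prove Lemma~\ref{lem:25} independently on each dense component $X_i$. Since every $X_i$ has diameter at most $2$ in $G$ and contains at most $\Delta+4d$ vertices, every vertex of $X_i$ can learn the entire induced subgraph $H[X_i]$, together with the list of external neighbours of each vertex of $X_i$, in $O(1)$ rounds. Using a canonical deterministic rule (say the coloring minimizing a fixed potential with ties broken lexicographically by vertex ids), each vertex of $X_i$ then reconstructs the same nice coloring locally, so the proof reduces to a combinatorial existence statement about $X_i$.

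For existence, I first exploit the non-hollow hypothesis: since $\overline{H[X_i]}$ contains no matching of size $100\sqrt{\Delta}$, the standard matching/vertex-cover bound gives a vertex cover of $\overline{H[X_i]}$ of size at most $200\sqrt{\Delta}$, whence $H[X_i]$ contains a clique $K$ of size at least $|X_i|-200\sqrt{\Delta}$. The remainder $R=X_i\setminus K$ has size $O(\sqrt{\Delta})$. I would colour the vertices of $K$ with distinct colours (producing singletons) and handle $R$ greedily: for each $v\in R$, either merge $v$ into the colour class of a vertex of $K$ non-adjacent to $v$ (which exists generically because $v$ has degree at most $\Delta$ while $|K|$ is close to $\Delta$), or otherwise introduce a fresh colour. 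With suitable book-keeping the resulting coloring satisfies $|C_i|\geq \Delta-2d$ together with the adjacency conditions (2)--(3), reproducing the content of Lemmas~19--20 of~\cite{MR14}.

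To prove properties (a)--(e) I fix, among all nice $c$-colorings of $X_i$, one that minimizes a carefully chosen potential function measuring the number of external neighbours of colour classes (with a canonical tie-break). Each property is then established by contradiction through a local exchange argument: if a colour class $K'$ violated the bound, one would produce either a Kempe-chain-type swap between $K'$ and another class, or a split--merge move that preserves niceness while strictly decreasing the potential. The non-$c$-reducer hypothesis enters critically in (d) and (e): the only way a colour class can accumulate more than $c-2\sqrt{\Delta}+3$ external neighbours without contradicting minimality is if $X_i$ is forced to resemble a $c$-reducer, so since $X_i$ is not a $c$-reducer one concludes $|C_i|=c-1$ and each vertex of $C_i$ has very few external neighbours.

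The main obstacle is not the existence of a nice coloring but the sharp quantitative conclusions in (c)--(e), in particular the assertion in (e) that $|C_i|\geq c-5\Delta^{1/4}$ whenever some colour class has more than $c-2\Delta^{3/4}$ external neighbours. Achieving these bounds requires combining three ingredients at once---the clique structure of $C_i$, the non-reducer hypothesis, and the global bound $8d\Delta$ on edges leaving $X_i$ from the dense decomposition---and carefully tracking constants. This technical heart of the lemma is exactly what Lemmas~21 and~25 of~\cite{MR14} achieve, and the plan is to follow their strategy, since our range of $c$ is a subset of the one they handle.
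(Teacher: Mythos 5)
Your overall plan --- note that each $X_i$ has diameter $2$ so a canonical nice coloring can be computed in $O(1)$ rounds, extract a large clique from the non-hollow hypothesis via the matching/vertex-cover bound, and defer the quantitative heart of (a)--(e) to Lemmas 19, 20, 21 and 25 of \cite{MR14} --- is essentially what the paper does: the paper does not reprove these lemmas either, it cites them and only adds the locality remark. However, there is one genuine gap, and it is precisely the one piece of content the paper does supply. Properties (b)--(e) of Lemma 25 in \cite{MR14} carry the precondition \emph{``if $X_i$ is not a reducer or a near-reducer''}, whereas the statement you are asked to prove is unconditional. Reducers are fine (they were deleted before forming $H$), but \emph{near-reducers} have not been deleted, so simply invoking \cite{MR14} ``since our range of $c$ is a subset of the one they handle'' does not yield the stated lemma. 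Your sentence claiming that a class with many external neighbours forces $X_i$ ``to resemble a $c$-reducer'' is exactly where a near-reducer slips through: a $c$-near-reducer is not a $c$-reducer, yet it resembles one, and your exchange argument gives no contradiction for it.

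The missing step is the short computation showing that any $c$-near-reducer \emph{vacuously} satisfies (c)--(e): its unique nice coloring has the stable set $S'$ of size $\Delta-c+1$ as its only non-singleton class, each vertex of $S'$ has at most $\Delta-c+1$ external neighbours, so $S'$ has at most $(\Delta-c+1)^2\le k_\Delta^2\le \Delta-3k_\Delta-2\le c-2k_\Delta-3\le c-2\sqrt{\Delta}+3$ external neighbours, using $c\ge\Delta-k_\Delta+1$ and $(k_\Delta+1)(k_\Delta+2)\le\Delta$. Hence the hypotheses of (c), (d) and (e) are never triggered for a near-reducer, and the precondition of \cite{MR14} can be dropped. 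This is where the assumption $c\ge\Delta-k_\Delta+1$ (rather than $c\ge\Delta-k_\Delta$) is actually used, so it cannot be waved away; without it your proof establishes a weaker, conditional statement. A secondary, more minor point: your greedy construction of the nice coloring (``merge $v$ into the class of a non-adjacent vertex of $K$, which exists generically'') does not by itself guarantee conditions (2) and (3) of niceness, but since you defer to Lemmas 19--21 of \cite{MR14} for that part anyway, this is a presentational rather than a substantive issue.
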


We stress that the union of the $c$-colorings of each of the dense
components $X_i$ is not necessarily a $c$-coloring of the union of
the dense components: there might be some edges between vertices of
different sets $X_i$ having the same color.
It should be noted that parts (b)--(e) of this result, as stated here, look a bit
different from their counterparts from Lemma 25 in~\cite{MR14}. Indeed, each of properties
(b)--(e) in Lemma 25 from~\cite{MR14} starts by the precondition\emph{``If $X_i$ is not a
reducer or a near-reducer''}. We assumed earlier that $X_i$ is not a
$c$-reducer, so this part of the precondition can certainly be
omitted in our case. A
\emph{$c$-near-reducer} is a subgraph $D$ which is the union of a
clique $C$ of size $c-1$ and a stable set $S'$ of size $\Delta-c+1$, such that each vertex
of $C$ is adjacent to every vertex of $S'$ (in particular each vertex
of $C$ has at most one neighbor outside $D$). Note that each vertex of
$S'$ has at most $\Delta-c+1$ neighbors outside $D$, and thus $S'$ has
at most $(\Delta-c+1)^2$ neighbors outside $D$. Since $c\ge
\Delta-k_\Delta+1$ and $\sqrt{\Delta}-3<k_\Delta=\left \lfloor \sqrt{\Delta+1/4}-3/2 \right \rfloor \leq \sqrt{\Delta+1/4}-3/2$, $S'$ has at most 
$$(\Delta-c+1)^2\le k_\Delta^2\le \Delta-3k_\Delta-2\le c-2k_\Delta
-3\le c-2\sqrt{\Delta}+3$$
neighbors outside $D$. In particular, in our case (i.e.\ when $c\ge \Delta-k_\Delta+1$),
any dense set $X_i$ which is a $c$-near-reducer satisfies
Lemma~\ref{lem:25}(a)--(e), so we can indeed remove the preconditions from
Lemma 25 in~\cite{MR14}. Note also that since each dense set $X_i$ has
diameter at most 2, a nice coloring of each $X_i$ with the additional
properties of Lemma~\ref{lem:25} can be found in $O(1)$ rounds.

\medskip

Based on the nice $c$-coloring of each of the dense components $X_i$
resulting from Lemma~\ref{lem:25}, we now construct (locally) a new graph $F$ from $H$, which will be
easier to color with a semi-random procedure, and such that any $c$-coloring of $F$ can be turned
(locally and efficiently) into a $c$-coloring of $H$.

\begin{lem}[Lemma 12 in~\cite{MR14}]\label{lem:12}
We can construct locally in $H$ in $O(1)$ rounds a graph $F$ of maximum degree at most $10^9\Delta$ (such
that a $c$-coloring of $H$ can be deduced from any $c$-coloring of $F$
in $O(1)$ rounds) and find a partition of the vertices of $F$ into $S, B,
A_1,\ldots, A_t$ such that:
\begin{itemize}
\item[(a)] Every $A_i$ is a clique with
  $c-10^8\sqrt{\Delta}\le|A_i|\le c$.
\item[(b)] Every vertex of $A_i$ has at most $10^8\sqrt{\Delta}$ neighbors in $F - A_i$.
\item[(c)] There is a set $\mathrm{All}_i \subseteq B$ of $c - |A_i|$ vertices which are adjacent to all of $A_i$. Every
other vertex of $F - A_i$ is adjacent to at most $\tfrac34\Delta + 10^8\sqrt{\Delta}$ vertices of $A_i$. 
  \item[(d)] Every vertex of $S$ either has fewer than $\Delta-3\sqrt{\Delta}$ neighbors in $S$ or has at least $900\Delta^{3/2}$ non-adjacent pairs of neighbors within $S$.
\item[(e)] Every vertex of $B$ has fewer than $c-\sqrt{\Delta}+9$
  neighbors in $F - \bigcup_j A_j$. 
\item[(f)] If a vertex $\in B$ has at least $c-\Delta^{3/4}$ neighbors
  in $F-\bigcup_j  A_j$, then there is some $i$ such that: $v$ has at most $c- \sqrt{\Delta}+9$ neighbors in $F -A_i$ and every vertex of $A_i$ has at most $30\Delta^{1/4}$ neighbors in $F - A_i$.
\item[(g)] For every $A_i$, every two vertices outside of $A_i \cup \mathrm{All}_i$ which have at
least $2\Delta^{9/10}$ neighbors in $A_i$ are joined by an edge of $F$.
\end{itemize}
\end{lem}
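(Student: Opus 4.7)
The plan is to construct $F$ by contracting the color classes of the nice $c$-colorings supplied by Lemma~\ref{lem:25} and then adding a few carefully chosen edges. First, for each dense set $X_i$, apply Lemma~\ref{lem:25} to obtain a nice $c$-coloring and identify all vertices in each color class to a single \emph{representative}; declare $A_i$ to be the set of representatives and make it a clique. A proper $c$-coloring of $F$ then assigns one color per class, and a $c$-coloring of $H$ is deduced in a single round by copying each representative's color back to the vertices of its class. Niceness condition~(1) together with $|X_i|\le \Delta+4d$ (where $d=10^6\sqrt{\Delta}$) shows that at most $10^8\sqrt{\Delta}$ classes are non-singletons, which yields $c-10^8\sqrt{\Delta}\le |A_i|\le c$ and hence property~(a).

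Let $B$ consist of all vertices of $S$, together with, for each $i$, a set $\mathrm{All}_i$ of exactly $c-|A_i|$ auxiliary vertices each joined to all of $A_i$ (and to nothing else). Thus $A_i\cup\mathrm{All}_i$ is a $c$-clique, so in any $c$-coloring of $F$ its vertices use all $c$ colors — this is precisely what forces the reconstruction to produce a proper $c$-coloring of $H$. Replace each edge of $H$ between $X_i$ and its external neighborhood by the edge joining the relevant representative in $A_i$ to the external endpoint. Property~(b) then follows from the bound of at most $8d\Delta$ external edges per $X_i$; the first clause of (c) follows from condition~(4) of the $d$-dense decomposition (a vertex with $\ge 3\Delta/4$ neighbors in $X_i$ must lie in $X_i$); (d) is just the sparseness clause of the decomposition, with $d=10^6\sqrt{\Delta}$ delivering $\Theta(\Delta^{3/2})$ non-adjacent pairs; and the quantitative parts of (c), (e), (f) translate directly from Lemma~\ref{lem:25}(a)--(e), since ``external neighbors of a color class in $H$'' corresponds exactly to ``neighbors of a representative in $F$''.

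For (g), add an edge between every pair of vertices $u,v \notin A_i \cup \mathrm{All}_i$ sharing at least $2\Delta^{9/10}$ neighbors in $A_i$. A double-counting argument bounds the contribution of these new edges: each such pair consumes $\ge 2\Delta^{9/10}$ of the at most $\Delta$ potential edges from $u$ into $A_i$, so $u$ gains at most $O(\Delta^{1/10})$ new edges per $A_i$, and summing over the $O(\Delta^{9/10})$ dense sets in which $u$ has that many neighbors keeps the total additional degree $O(\Delta)$; hence the maximum degree of $F$ stays below $10^9\Delta$. All these contractions and edge additions involve only vertices within distance at most $4$ in $H$ (since each $X_i$ has diameter at most~$2$), so $F$ can be constructed in $O(1)$ rounds. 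I expect the main technical obstacle to be property~(f), which requires combining the dichotomy of Lemma~\ref{lem:25}(c)--(e) with a careful choice of witness $A_i$ for every overloaded vertex in $B$; this is the step where the near-reducer case analysis from~\cite{MR14} is the most delicate.
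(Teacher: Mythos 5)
There is a genuine gap, and it is exactly the step the paper identifies as the ``significant issue'' in the construction. You take $A_i$ to be the set of \emph{all} representatives of color classes of $X_i$ and relegate $\mathrm{All}_i$ to fresh auxiliary vertices ``joined to nothing else''. Both choices fail. First, a contracted non-singleton class can have up to $c-\sqrt{\Delta}+3\approx\Delta$ external neighbors (Lemma~\ref{lem:25}(b)), so with your $A_i$ property (b) (at most $10^8\sqrt{\Delta}$ neighbors outside $A_i$) is violated; in the intended construction $A_i$ is a subset of $C_i$, the singleton classes, whose vertices have only $O(\sqrt{\Delta})$ external neighbors because $C_i$ is a clique of size $\Delta-O(\sqrt{\Delta})$. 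Second, if the $\mathrm{All}_i$ vertices carry no external edges, a $c$-coloring of $F$ does not determine a proper $c$-coloring of $H$ (the color given to a contracted class must avoid the colors of that class's external neighbors), and properties (e)--(f) become vacuous rather than the constraints they are meant to be. In the paper, $\mathrm{All}_i=D_i-A_i$ consists of the contracted non-singleton classes (and certain removed vertices of $C_i$), which retain their adjacencies.

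The central missing idea is the treatment of vertices of $F-A_i$ with more than $\tfrac34\Delta$ neighbors in $A_i$, i.e.\ the second clause of (c). Your appeal to condition (4) of the dense decomposition only controls \emph{original} vertices of $H$: after contraction, a single vertex of $F$ representing a class $\{x,y\}$ of some other $X_j$ inherits the neighbors of both $x$ and $y$ and can be adjacent to essentially all of $A_i$. Since $A_i\cup\mathrm{All}_i$ is a $c$-clique that must use all $c$ colors, such a vertex would block its color from $A_i$ and the later coloring argument collapses. The paper's construction resolves this by choosing, for each such heavy external vertex $v$, a non-neighbor $w\in D_i$, forming stable sets $\{w\}\cup R_w$ covering all heavy vertices, contracting each $\{w\}\cup R_w$ into one vertex (so $v$'s color is forced to appear on $w$ inside $D_i$), and moving those $w$'s from $A_i$ into $\mathrm{All}_i$ (Lemma 30 in~\cite{MR14}). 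Without this device, properties (c) and (g) cannot be established, and (f) --- which you correctly flag as delicate --- has no chance. As a minor additional point, your double-counting for (g) bounds the wrong quantity: the number of new edges $u$ gains at $A_i$ is the number of \emph{other} heavy external vertices of $A_i$, bounded via the total number of external edges of $A_i$, not via $u$'s own edge budget into $A_i$.
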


There is one subtlety in the application of Lemma 12 from~\cite{MR14}:
the statement of Lemma 12 there start with the precondition \emph{``For any
  minimum counterexample,''}. Here we avoid this precondition in the
same way Molloy and Reed avoid it in their application of Lemma 12 in
the algorithmic proof of Theorem 43 from~\cite{MR14} (by starting to remove deletable
reducers and hollow sets).

\medskip

We explain briefly how the graph $F$ is constructed in~\cite{MR14} to stress that
the construction can indeed be performed locally in $H$ (and then in
$G$).

The construction starts by doing the following for each colored dense
component $X_i$. Recall that $C_i$ was defined above as the set of vertices of
$X_i$ whose
color class is a singleton, and it follows from the definition of a
nice coloring that $C_i$ is a clique of size at least $\Delta-2\cdot
  10^6\sqrt{\Delta}$. Now, each color class of size at least 2 (i.e.\
  each color class which is not a singleton)  in $X_i$ is contracted
into a single vertex, and vertices and edges are added inside $X_i$ to
make it into a clique $D_i$ of size precisely $c$. It can be proved
using Lemma~\ref{lem:25} that the maximum degree does not increase too much and that
each clique $D_i$ is not much larger than $C_i$ (see Lemma 29
in~\cite{MR14}). 

\smallskip

A significant issue when trying to find a $c$-coloring of $H$ (or
rather the current modification of $H$) is
that given a clique $D_i$, there might be vertices outside $D_i$ that
have many neighbors (say more than $\tfrac{3\Delta}4$) in $C_i$. Each
such vertex must be in $D_j-C_j$, for some $j\neq i$. Consider such a
vertex $v\in D_j-C_j$, with many neighbors in $C_i$. We need to make
sure that the color of $v$ will be used by one of the few
non-neighbors of $v$ in $D_i$, and one way to do it is, for some vertices
$w \in D_i$, to construct a set $R_w$ of vertices with
many neighbors in $C_i$ such that $\{w\} \cup R_w$ is a stable  set
and every vertex with many neighbors in $C_i$ lies in such a set
$R_w$. We then contract each set $\{w\} \cup R_w$ into a single
vertex (this will force that all these vertices have the same color at
the end), and denote by $A_i$ the set $C_i$ after the removal of the
vertices $w$ for which some set $R_w$ was defined. We also set $\mathrm{All}_i=D_i-A_i$. Again it can be proved that the
maximum degree does not increase too much and each $A_i$ is not too
small compared to $C_i$ (see Lemma 30
in~\cite{MR14}). 

\smallskip

A second issue (related to the issue described above) is that we
need to prevent that many different external neighbors of $A_i$ are
all colored with the same color, and their neighborhoods cover $A_i$
(this would prevent this color from being used in $A_i$). The way it
is solved in~\cite{MR14} is by adding an edge between every pair of external
neighbors of $A_i$ having at least $\Delta^{9/10}$ neighbors in
$A_i$. It is proved (see Lemma 31 in~\cite{MR14}) that it does not
increase the maximum degree too much and is enough to deduce
properties (a)--(g) of
Lemma~\ref{lem:12} (the issue raised in this paragraph is in
particular related to property (g)).

\smallskip

To sum up, $F$ has been obtained from $H$ by identifying (or adding edges
between) pairs of vertices at distance at most 4, since each dense
component has diameter at most 2 and any two vertices that have been
identified or joined by an edge have a neighbor in the same dense component. Moreover, each
modification has been carried out independently by each dense set
$X_i$ (even if the modifications had some impact outside of $X_i$), so
$F$ can be simulated by
$H$ (and then by $G$) with at most a small multiplicative loss on the round complexity. It is also clear that a $c$-coloring of $H$ can
be obtained from any $c$-coloring of $F$ in $O(1)$ rounds.

\medskip

It remains to show how to efficiently color $F$ with $c$ colors.

\begin{lem}\label{lem:13}
The graph $F$ described in Lemma~\ref{lem:12} can be colored with $c$
colors in $\textsf{T}_{\de+\Omega(\sqrt{\Delta})}(n,\Delta) +O((\log\Delta)^{13/12})\cdot \textsf{T}_{LLL}(n,\poly
\Delta)$ rounds, w.h.p.
\end{lem}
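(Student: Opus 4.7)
The plan is to follow the iterative semi-random coloring procedure from~\cite{MR14}, replacing each application of the Lovász Local Lemma by its distributed counterpart from Theorem~\ref{thm:LLL}. The elementary iteration is a wasteful coloring step: each currently uncolored vertex $v$ picks a color uniformly at random from its current list $L(v)$ (the colors not already used in its colored neighborhood), independently of the other vertices, and retains that color unless a neighbor picks the same one. After each iteration we remove the newly colored vertices, update lists, and repeat. For each vertex we track the slack $|L(v)| - d(v)$, where $d(v)$ is the degree of $v$ in the uncolored subgraph, and aim to show that slack grows geometrically, so that after $O((\log \Delta)^{13/12})$ iterations every uncolored vertex has slack at least $\Omega(\sqrt{\Delta})$. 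At that point, by Observation~\ref{obs:1} we are in a $(\de+\Omega(\sqrt{\Delta}))$-list coloring instance on a graph of maximum degree $O(\Delta)$, and Theorem~\ref{thm:lcolp} finishes the coloring in $\textsf{T}_{\de+\Omega(\sqrt{\Delta})}(n,\Delta)$ rounds.

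At each iteration, progress relies on Talagrand's Inequality (Theorem~\ref{thm:tal}) to establish sharp concentration at every vertex of two quantities: the size of the updated list, and the number of neighbors that remain uncolored. The bad events are deviations of these quantities from their expectations beyond the chosen tolerance. Because each such event depends only on the random choices within a constant-radius ball around $v$, the associated event-dependency graph has degree at most $\poly(\Delta)$; and following the same kind of Talagrand computation as in the proof of Lemma~\ref{lem:sparse}, the per-event failure probability is at most $\exp(-\Omega(\sqrt{\Delta}))$, which beats any fixed power of $\Delta$. The polynomial criterion required by Theorem~\ref{thm:LLL} is therefore comfortably satisfied, and each iteration can be implemented in $\textsf{T}_{LLL}(n,\poly\Delta)$ distributed rounds. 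Summed over the $O((\log \Delta)^{13/12})$ iterations plus the final list-coloring step, this matches the claimed round complexity.

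The slack growth is argued differently on each part of the partition $S, B, A_1,\ldots,A_t$ given by Lemma~\ref{lem:12}. Sparse vertices in $S$ gain slack from repeated colors produced in their neighborhoods, exactly as in the proof of Lemma~\ref{lem:sparse}, exploiting property~(d). Vertices of $B$ are handled using properties~(e)--(f), which either bound their degree outside $\bigcup_j A_j$ below $c-\sqrt{\Delta}+9$ directly, or pin them to a single $A_i$ whose vertices have small outside degree; this makes the ``kept colors vs. uncolored neighbors'' budget tractable. The cliques $A_i$ must be treated jointly with the sets $\mathrm{All}_i$ and with their high-degree external neighbors: property~(c) ensures that $\mathrm{All}_i$ together with a large fraction of $A_i$ remains available, while property~(g), which forces pairs of heavy external neighbors of $A_i$ to be adjacent, prevents a color from being simultaneously blocked across all of $A_i$. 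The main obstacle, and the part that is most delicate to import from~\cite{MR14}, is the bookkeeping that verifies concentration, slack growth, and the polynomial LLL criterion uniformly across all $O((\log \Delta)^{13/12})$ iterations and all parts of the partition; once this is in place, the distributed LLL of Theorem~\ref{thm:LLL} can be substituted for the classical one everywhere in the Molloy--Reed argument, yielding the stated bound.
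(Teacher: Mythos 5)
There is a genuine gap, and it is concentrated on the cliques $A_i$. Your elementary iteration --- each uncolored vertex picks a color uniformly at random from its list and keeps it unless a neighbor collides --- combined with the goal of growing the slack $|L(v)|-d(v)$ to $\Omega(\sqrt{\Delta})$, works for $S$ and (with the right ordering) for $B$, but it cannot work on the $A_i$. First, inside a clique of size close to $c$ with lists of size close to $c$, the uniform wasteful procedure colors almost nothing: nearly every vertex collides, so no progress is made. Second, and more fundamentally, $A_i\cup\mathrm{All}_i$ is a clique of size exactly $c$ (Lemma~\ref{lem:12}(a),(c)), so in any $c$-coloring every color must appear \emph{exactly once} there; these vertices have zero slack by construction, and the endgame on them is not a $(\de+\Omega(\sqrt{\Delta}))$-list-coloring instance but a system-of-distinct-representatives problem between the vertices of $A_i$ and the colors missing from $\mathrm{All}_i$. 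The proof therefore colors each $A_i$ by a uniformly random \emph{permutation} of those colors (not independent uniform choices), which forces McDiarmid's inequality in place of Talagrand's, and it must maintain throughout the entire process the invariant that for every color $x$ at most $\tfrac34\Delta+o(\Delta)$ vertices of $A_i$ have an external neighbor colored $x$ --- otherwise some color could be blocked on all of $A_i$ while unused in $\mathrm{All}_i$, making completion impossible. Property (g) helps establish this invariant at the start, but preserving it is exactly why the coloring must follow the specific order $S$, then $B_H$ (the vertices of $B$ with at most $c-\Delta^{3/4}$ neighbors outside $\bigcup_j A_j$), then the $A_i$ whose vertices have few external neighbors, then $B_L$ via Lemma~\ref{lem:12}(f), then the remaining $A_i$. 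Your proposal acknowledges that the $A_i$ are ``delicate'' but keeps the uniform-choice, slack-growth framework for them, which is the step that would fail.

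A secondary omission: once the $A_i$ are colored by random permutations, it is no longer obvious that the distributed LLL of Theorem~\ref{thm:LLL} applies, since that theorem is stated in the variable setting with independent variables per vertex. One has to observe that the relevant dependency graph has one vertex per uncolored $A_i$ with the whole permutation as its single random variable, and that adjacent $A_i$'s can communicate inside $H$ because each is (essentially) a clique of diameter at most $2$; only then can the distributed LLL be substituted in the permutation steps.
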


We will be rather brief here (the proof of the corresponding
sequential statement, Lemma 13 in~\cite{MR14}, takes 20 pages). Consider some $1\le i \le t$. Since
 $\mathrm{All}_i\cup A_i$ forms a clique of size $c$, we need to make
 sure that the colors that do not appear in $\mathrm{All}_i $ do not
 appear either on too many external neighbors of $A_i$. A key property of the construction of
 $F$ (see  properties (c) and (g) in Lemma~\ref{lem:12}) is
 that for any color $x$, there is at most one vertex $v\not\in
 \mathrm{All}_i\cup A_i$ having at least $2\Delta^{9/10}$ neighbors in
 $A_i$ that is colored $x$, and moreover $v$ has at most
 $\tfrac34\Delta+o(\Delta)$ neighbors in $A_i$. The goal will be to
 maintain this property throughout the whole process, namely that all
 of the time, at most $\tfrac34\Delta+o(\Delta)$ vertices of $A_i$ have a
 neighbor colored $x$ outside of $\mathrm{All}_i\cup A_i$ (let us call
 this event $E(i,x)$).

\smallskip

The starting point will be to color $S$ (the $d$-sparse vertices, see
property (d) of Lemma~\ref{lem:12})
randomly as in the proof of Lemma~\ref{lem:sparse}, i.e.\ with the
property that many colors are repeated in the neighborhoods of the
high degree vertices, but also with the
additional property that $E(i,x)$ still holds for any $i,x$ after the
coloring. Similarly as in the proof of Theorem~\ref{thm:1}, coloring $S$ takes
$\textsf{T}_{LLL}(n,\poly \Delta)+\textsf{T}_{\de+\Omega(\sqrt{\Delta})}(n,\Delta)$
rounds, w.h.p. (after a single application of the distributed
Lov\'asz Local Lemma, coloring the uncolored vertices of $S$ is an
instance of a $(\de+\Omega(\sqrt{\Delta}))$-list-coloring
problem). Note that another part of the algorithm involves solving
instances of the harder $(\de+1)$-list-coloring
problem (Lemmas~\ref{lem:8}), so this part is
dominated by the other parts of the algorithm.

We then proceed to extend the coloring to $B$. Recall that by property (e) of
Lemma~\ref{lem:12}, each vertex of $B$ has at most
$c-\Omega(\sqrt{\Delta})$ neighbors in $F - \bigcup_j A_j$. It turns
out that it is a bit too high to randomly extend the coloring of $S$
to $B$ while maintaining property $E(i,x)$, so instead we color the
remaining vertices in this order:

\begin{enumerate}
\item We first color the set $B_H$ of vertices of $B$ with at
most $c-\Delta^{3/4}$ neighbors in $F - \bigcup_j A_j$ (coloring these vertices
will preserve $E(i,x)$). 
\item We then color the sets $A_i$ such that each vertex
of $A_i$ has at most $30\Delta^{1/4}$ neighbors outside of
$\mathrm{All}_i\cup A_i$.
\item We color
$B_L=B-B_H$, using property (f) of
Lemma~\ref{lem:12} (which implies that property $E(i,x)$ can now be
preserved while coloring these vertices). 
\item Finally we color the sets $A_i$ that have not been colored yet.
\end{enumerate}

The proofs that desirable properties are maintained during the
coloring of the vertices of $S$ and $B$ and the $A_i$ are fairly
similar to the proof of Lemma~\ref{lem:sparse}, in the sense that they
boil down to the estimation of the expectation of some random
variables, the proof that these random variables are highly
concentrated, and then some application of the Lov\'asz Local Lemma.

We should note two important differences, though. 

\begin{itemize}
\item The first is that
instead of a single random partial coloring, followed by  a
 greedy procedure completing the coloring, the procedure
for coloring $S$, $B_H$, and $B_L$
here involves multiple rounds (more specifically, at most $O((\log \Delta)^{13/12})$
rounds, w.h.p.) of random partial coloring and a careful
study of all the random variables throughout the process. 
\item The second
is that while coloring the $A_i$, the partial random coloring
procedure is a bit
different than in the proof of  Lemma~\ref{lem:sparse}. Recall that
each $A_i$ is a clique, so
assigning each vertex a color uniformly at random, and then uncoloring
pairs of vertices with the same color would be extremely
unpractical. Instead, each $A_i$ is colored with a permutation of the
$|A_i|$ colors not appearing on $\mathrm{All}_i$, taken uniformly at
random among all the possible permutations. A consequence is that
instead of using Talagrand's Inequality to prove the concentration of
random variables around their expectation, McDiarmid's
Inequality has to be used instead (see~\cite{MR14}), but the resulting
bounds are of a similar order of magnitude.
\end{itemize}

It can be checked that in all the applications of the Lov\'asz Local
Lemma in~\cite{MR14}, bad events correspond to subgraphs of $H$ of
bounded radius, and the probabilities of the bad events are
smaller than any fixed polynomial function of the maximum degree of
the event dependency graph (these probabilities are typically of order
$\exp(-d^{\alpha})$ or $\exp(-\beta\log^2
d)$, where $\alpha,\beta>0$ and
$d$ is the maximum degree of the event dependency graph),
so in particular any polynomial criterion is satisfied and we can 
substitute the distributed Lov\'asz Local Lemma everywhere
in the proof, and since the semi-random process involves at most
$O((\log \Delta)^{13/12})$
successive applications of the Lov\'asz Local Lemma\footnote{In the
  proof of Molloy and Reed~\cite{MR14} the authors use $O(\Delta^\lambda)$
  successive applications of the Local Lemma (for any fixed constant $\lambda>0$), but
the proof can easily be optimized to work with only $O((\log
\Delta)^{13/12})$ applications of the Local Lemma. The bound $\log^{13/12} \Delta$ comes
from the proof of the concentration of $Z_C'$, page 175
of~\cite{MR14}, which dominates the other related bounds on the number $I$
of iterations in the proof of Lemma 34 of~\cite{MR14}. Note that the
authors of~\cite{MR14} were aiming at a polynomial complexity, so it
did not make much sense for them to replace the polynomial number of iterations
by a polylogarithmic number of iterations, at the cost of tedious computations.}, a $c$-coloring of $F$ can
be obtained in $\textsf{T}_{\de+\Omega(\sqrt{\Delta})}(n,\Delta) +O((\log
\Delta)^{13/12})\cdot \textsf{T}_{LLL}(n,\poly \Delta)$ rounds, w.h.p.

\medskip

We find it necessary to insist on a technical (but important) detail
here. Theorem~\ref{thm:LLL} uses the so-called
\emph{variable setting} of the Local Lemma, which covers most
applications of the original Local Lemma but not all of them. In
particular we have to be careful here since the coloring of the
$A_i$ involved random permutations of colors assigned to a given set
of vertices, instead of colors
chosen uniformly at random for each vertex, and it is not clear at first sight whether the
former can be handled in the variable setting. It turns out that 
it can, since in the proof of Lemmas 39 and 40 in~\cite{MR14} the
graph under consideration has one vertex for each uncolored $A_i$, and
an edge between two vertices if the corresponding sets $A_i$ are
adjacent in $H$ (since each set $A_i$ is a clique, this graph can be
simulated within $H$). The variable associated to each vertex is the
random permutation of colors assigned to the corresponding set $A_i$,
so this is indeed an instance of the variable setting of the Local
Lemma, and we can use Theorem~\ref{thm:LLL}.

\medskip

Now that $F$ has been colored with $c$ colors, we obtain a $c$-coloring of $H$ in $O(1)$ rounds using Lemma~\ref{lem:12},
and it remains to color the dense
components $X_i$ that are $c$-reducers, or such that $\overline{G[X_i]}$ contains a
matching of size at least $100\sqrt{\Delta}$ (recall that these dense components
had been removed from the graph at the beginning of the
procedure). It follows from Lemmas~\ref{lem:8} and~\ref{lem:16} that
the $c$-coloring of $H$ can be extended to the remaining dense
components of $G$ w.h.p.\ in $T_{\de+1}(n,\Delta)$
rounds. Hence, the overall round complexity of the algorithm is $O(T_{\de+1}(n,\Delta))+O((\log
\Delta)^{13/12})\cdot \textsf{T}_{LLL}(n,\poly \Delta)$, w.h.p. Using
Theorems~\ref{thm:lcol1} and~\ref{thm:LLL}, this is $\min\{O((\log\Delta)^{1/12}\log n), 2^{O(\log \Delta+\sqrt{\log \log
    n})}\}$ rounds w.h.p., which concludes the proof of Theorem~\ref{thm:col}.\hfill
$\Box$

\subsection{Summary of our contributions}

We now make a brief summary of our contributions (to make
clear what we added and subtracted from the proof of Molloy and
Reed~\cite{MR14}).

\smallskip

In~\cite{MR14}, $c$-reducers are dealt with slightly differently: some are
simply removed as we do here, but some are reduced as in the
definition of $c$-reduction of Section~\ref{sec:reducer} (i.e.\ by removing the clique and
contracting the stable set into a single vertex). This operation can
create new $c$-reducers, and thus $c$-reducers have to be reduced sequentially
until no $c$-reducer appears in the graph (the fact that it has to be
done sequentially is essentially the proof of Theorem~\ref{thm:sharp}). For $c$-near-reducers, the
situation is slightly more complicated (see Lemma 27 in~\cite{MR14}) but again
inherently sequential. It is fortunate that in our case (i.e.\ when $c\ge
\Delta-k_\Delta+1$), we do not need to worry about these cases, as
explained after Lemma~\ref{lem:25}. So our
contribution is simply to have checked that the initial $d$-dense
decomposition can be computed locally (see Lemma~\ref{lem:decompo}), that the construction of $F$
can be performed locally, that all the applications of the Local Lemma
can be also carried out locally in the phase where the $c$-coloring of
$F$ is obtained, and that the resulting coloring of $H$ can be
extended to $G$ locally and efficiently (see
Lemmas~\ref{lem:8} and~\ref{lem:16}).

\section{Concluding remarks}\label{sec:conc}

Note that using recent results of Ghaffari \emph{et
  al.}~\cite{GHK17}, the randomized algorithms in Theorem~\ref{thm:col}
and~\ref{thm:1} can be replaced by deterministic algorithms with a
round complexity of $2^{O(\log \Delta+\sqrt{\log n})}$. An interesting question is whether the dependency in
  $\Delta$ can be significantly reduced (the same question can be
  asked for Theorem~\ref{thm:1}
and~\ref{thm:col}). It seems to us that techniques that have been developed
so far, such as Theorem 1.8 in~\cite{GHK17} or the ad-hoc techniques from~\cite{FG17}, do not work well in our case.

\medskip

When the maximum degree $\Delta$ is a constant, the $(\de+1)$-list coloring
problem can be solved in $O(\log^*n)$ rounds~\cite{GPS88,Lin92}, which is much faster than the
round complexity of Theorems~\ref{thm:lcol1} and~\ref{thm:lcolp}. In
this case it is interesting to use a slightly faster version of
Theorem~\ref{thm:LLL} from~\cite{GHK17}, with round complexity
$\exp(\exp(O(\sqrt{\log \log \log n})))$, or $\exp(\exp(\exp(O(\sqrt{\log
  \log \log \log n}))))$, or more generally
$\exp^{(i)}(O(\sqrt{\log^{(i+1)}n}))$ for any $1\le i \le
\log^*n-2\log^* \log^*n$. It is not difficult to see that in this case
this round
complexity dominates the other parts of the algorithms used in this
paper. It follows that the round complexity in Theorem~\ref{thm:1}
and~\ref{thm:col} in the bounded degree case can be replaced by $\exp^{(i)}(O(\sqrt{\log^{(i+1)}n}))$ for any $1\le i \le
\log^*n-2\log^* \log^*n$. Moreover, any improvement on the round
complexity of the distributed Lov\'asz Local Lemma under some polynomial
criterion would immediately yield an improved complexity in Theorems~\ref{thm:col}
and~\ref{thm:1} in the case of bounded degree graphs.

\medskip

We have proved that the threshold between efficient tractability and
intractability of finding an optimal coloring of a graph of (sufficiently
large) maximum degree in the \textsf{LOCAL} model occurs at
$c=\Delta-k_\Delta+1$ colors (for all values of $\Delta$), or when
$c=\Delta-k_\Delta$ and $(k_\Delta+1)(k_\Delta+2)=\Delta$. So a
natural question is the status of the round complexity of obtaining an optimal
coloring when $c=\Delta-k_\Delta$ and
$(k_\Delta+1)(k_\Delta+2)<\Delta$. We have no clear idea of what the
right answer should be, but in this case we can at least decide if the
chromatic number is at most $c$ in $O(\Delta^{5/2})$ rounds (deterministically),
using Corollary 7c(ii) in~\cite{MR14}, which says that in this case
we only need
to check the $c$-colorability of connected subgraphs of size
$O(\Delta^{5/2})$, which can be done in $O(\Delta^{5/2})$ rounds in
the \textsf{LOCAL} model of computation.

\begin{acknowledgement} 
We thank David Harris for pointing out the updated version
of~\cite{GHK17} and for his kind remarks on earlier versions of the
paper. We also thank two anonymous reviewers for their detailed
comments and suggestions.
\end{acknowledgement}

\end{document}